\theoremstyle{plain}
\newtheorem{theorem}{Theorem}
\newtheorem{proposition}[theorem]{Proposition}
\newtheorem{lemma}[theorem]{Lemma} 
\newtheorem{corollary}[theorem]{Corollary} 
\newtheorem*{theorem*}{Theorem}
\newtheorem*{corollary*}{Corollary}
\newtheorem*{maintheorem*}{Main Theorem}
\newtheorem*{example*}{Example}
\theoremstyle{definition} 
\newtheorem{definition}[theorem]{Definition}
\newtheorem{remark}[theorem]{Remark}
\renewcommand{\todo}[2][]{\tikzexternaldisable\@todo[#1]{#2}\tikzexternalenable}
\newcommand{\R}{{\mathbb{R}}}
\newcommand{\Z}{{\mathbb{Z}}}
\newcommand{\A}{\mathbf{A}}
\newcommand{\conv}{\text{conv}}
\newcommand{\M}{\mathcal{M}}
\newcommand{\subdiv}{\Sigma}
\newcommand{\seco}{\text{sc}}
\newcommand{\fil}[1]{T_{\text{min}}({#1})}
\title{The MST-fan of a regular subdivision} 
\author[H.\,Eble]{Holger Eble}
\address{Technische Universit\"at Berlin, Chair of Discrete Mathematics/Geometry}
\email{eble@math.tu-berlin.de}
\begin{document}
\thanks{This work was supported by the
SFB-TRR 195 'Symbolic Tools in Mathematics and their Application' of the German Research
Foundation (DFG)}

\begin{abstract}
The dual graph $\Gamma(h)$ of a regular triangulation $\subdiv(h)$ carries a natural metric structure. The minimum spanning trees of $\Gamma(h)$ recently proved to be conclusive for detecting significant data signal in the context of population genetics. In this paper we prove that the parameter space of such minimum spanning trees is organized as a polyhedral fan, called the MST-fan of $\subdiv(h)$, which subdivides the secondary cone of $\subdiv(h)$ into parameter cones. We partially describe its local face structure and examine the connection to tropical geometry in virtue of matroids and Bergman fans.
\hfill\\
\smallskip
\noindent \textbf{Keywords.} Polyhedral geometry, regular subdivisions, optimization, spanning trees
\end{abstract}

\maketitle

\section{Introduction}\label{sec:intro}

Triangulations, or more generally polyhedral complexes, play a crucial role both in theoretical and applied scientific fields related to mathematics, such as  genetics \cite{BPS2007} and machine learning \cite{machine+learning}. See \cite[Chapter 1]{SantosTriangulations} for an overview of triangulations appearing within mathematics. Given a finite point configuration $\A\subset\Z^n$, in practice one often considers a specific class of subdivisions of $\A$, known as \emph{regular subdivisions}. Their maximal cells can be described as shadows of  $n$-dimensional polytopes in $\R^{n+1}$, namely facets of the upper convex envelope associated to height functions $h\colon \A\to \R$ lifting $\A$ into $\R^{n+1}$. The cellular incidences of a fixed regular triangulation $\subdiv(h)$ of $\A$ are recorded in its dual polyhedral complex, the tropical hypersurface $V(H)$ given by the tropical polynomial
\[H(x):=\bigoplus_{a\in \A}h(a)\odot x^a:= \max_{a\in \A}\{h(a)+\langle a,x\rangle\}\enspace,\] 
which is defined to be the set of points  $x\in\R^n$ where at least two of the linear forms $h(a)+\langle a,\cdot\,\rangle$ of $H$ attain  the maximum $H(x)$, cf. \cite{ETC,MaclaganSturmfels}. The bounded cells of $V(H)$ form the \emph{tight span} of $\subdiv(h)$ \cite{HerrmannJoswig+TightSpans}, whose $1$-skeleton is called the \emph{dual graph} $\Gamma(h)$ of $\subdiv(h)$. The nodes of $\Gamma(h)$, i.e. the $0$-cells of $V(H)$, correspond to maximal cells of $\subdiv(h)$ and their adjacency relations are recorded by the edges $E(\Gamma(h))$, which in addition are naturally weighted: If $s=\conv(v_1,\ldots,v_{n+1})$ and $t=\conv(v_2,\ldots,v_{n+2})$ are adjacent maximal simplices of $\subdiv(h)$, the \emph{lattice length} \cite{JoswigBrodsky} or \emph{tropical edge length} \cite{nabijou2021gromovwitten} of the edge ${(s,t)\in E(\Gamma(h))}$ is given by
\begin{align}\label{edgelength}
 L_h(s,t):= |\tilde{L}_h(s,t)|:=\left|\det\begin{pmatrix}
    1 & v_{1,1} & v_{1,2} & \ldots & v_{1,n} & h(v_1) \\
    1 & v_{2,1} & v_{2,2} &  \ldots & v_{2,n} & h(v_2) \\
    \vdots  & \vdots &  \vdots & \vdots & \vdots &\vdots \\
    1 & v_{n+2,1} & v_{n+2,2} & \ldots & v_{n+2,n}  & h(v_{n+2})
  \end{pmatrix}\enspace \right|\enspace .
  \end{align} 
Now, for a fixed height function  $h\colon \A\to \R$ the greedy algorithm choses some minimum spanning tree $\fil{h}\subset \Gamma(h)$, cf. \cite[Chapter 50]{Schrijver}, and in the sequel of this article we will study the fibers of the operation $\fil{\,\cdot\,}$. 
It is well-known \cite[Section 5.2]{SantosTriangulations} that the parameter space $\R^\A$ for regular subdivisions $\subdiv(h)$ of $\A$ is organized in a nicely behaved union of secondary cones $\seco(h)\subset \R^\A$, called the secondary fan of $\A$. The following Main Theorem is Corollary \ref{thm:Khisafan} and it identifies the parameter space $\mathcal{K}(h)=\cup\mathcal{K}(T)$ of \emph{realizable} \textbf{m}inimum \textbf{s}panning \textbf{t}rees $T\subset\Gamma(h)$, i.e. those trees $T$ with $T=\fil{g}\subset\Gamma(h)$ for some $g\in \seco(h)$, as a conic subdivision $\mathcal{K}(h)$ of $\seco(h)$. We refer to $\mathcal{K}(h)$ as the \emph{MST-fan} structure on $\seco(h)$. 
\begin{maintheorem*} 

Let $\subdiv(h)$ be a regular triangulation of the point configuration $\A$.
The MST-fan $\mathcal K(h)$ of $\subdiv(h)$ is a pure fan in $\seco(h)\subset\R^\A$ with full support. Its maximal cones are in bijection with realizable minimum spanning trees of~$\Gamma(h)$.
\end{maintheorem*}

Let again $h\colon\A\to\R$ be a fixed height function and let $G=G(V,E)=\Gamma(h)$. A collection $\mathcal{C}=\{\mathcal{K}(T^{(\mathcal{C}_i)})\}$ of maximal cones in $\mathcal{K}(h)$ is called \emph{$G$-saturating} if the spanning trees $T^{(\mathcal{C}_i)}$ cover $G$, i.e. if $E=\cup_i E(T^{(\mathcal{C}_i)})$ holds.
Let $C\subset \R^\A$ be the polyhedral complex with cells $\bigcap\mathcal{K}(T^{(\mathcal{C}_i)})$ where  $\mathcal{C}$ runs through the $G$-saturating collections.

\begin{corollary*}  The complex  $C$ naturally embeds into the tropical linear space $\text{trop}(\M)$, where $\M:=\M(G)$ is the cycle matroid of $G$, and equals the Bergman fan of $M$ restricted to valuations given by tropical edge lengths as in equation (\ref{edgelength}). 
\end{corollary*}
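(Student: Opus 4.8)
My plan is to realise the claimed embedding through the \emph{edge--length map} $\lambda\colon\R^{\A}\to\R^{E}$ sending a height vector $g$ to the vector $\bigl(L_g(s,t)\bigr)_{(s,t)\in E}$ of tropical edge lengths of $\subdiv(g)$, and to show that it identifies $C$ with the part of $\text{trop}(\M)$ contained in its image. First I would verify that $\lambda$ is linear on $\seco(h)$: expanding the determinant in (\ref{edgelength}) along its last column exhibits each $g\mapsto\tilde L_g(s,t)$ as a linear functional of $g$, and since $\subdiv(g)=\subdiv(h)$ throughout $\seco(h)$ the sign of $\tilde L_g(s,t)$ is constant on the (connected) relative interior of $\seco(h)$, so there $g\mapsto L_g(s,t)=|\tilde L_g(s,t)|$ agrees with a fixed linear functional. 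The kernel of $\lambda$ is the space of affine functions on $\A$, which is exactly the lineality space of the secondary fan, so $\lambda$ is injective modulo lineality; this will be the natural map, its codomain $\R^{E}$ being the ambient space of $\text{trop}(\M)$, and the ``valuations given by tropical edge lengths'' are precisely the points of $\lambda(\seco(h))$.

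Next I would record that $\mathcal K(h)$ is, inside $\seco(h)$, pulled back along $\lambda$ from the fan of the graph $G$. Kruskal's algorithm is the matroid greedy for $\M=\M(G)$ applied to the weights $\lambda(g)$, so the classical cycle and cut characterisations of minimum spanning trees give, for each realizable tree $T$,
\[
\mathcal K(T)=\seco(h)\cap\lambda^{-1}(N_T),\qquad N_T=\{\,w\in\R^{E}: w_e\le w_f\text{ whenever }f\notin T\text{ and }e\in Z_T(f)\,\},
\]
where $Z_T(f)$ is the $T$--fundamental cycle of a non--tree edge $f$ and $N_T$ is the maximal cone of weight vectors for which $T$ is a minimum spanning tree of $G$. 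Hence each cell of $C$ is $\seco(h)\cap\lambda^{-1}\bigl(\bigcap_iN_{T_i}\bigr)$ for a $G$--saturating family $\{T_i\}$, and it suffices to prove the set identity $|C|=\seco(h)\cap\lambda^{-1}\bigl(\text{trop}(\M)\bigr)$ together with compatibility of cell structures, using the standard description $\text{trop}(\M)=\{\,w\in\R^{E}: \text{for every cycle }Z\text{ of }G,\ \max_{e\in Z}w_e\text{ is attained at least twice}\,\}$.

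For ``$\subseteq$'' I would take $g$ in a cell of $C$ with $G$--saturating family $\{T_i\}$, set $w=\lambda(g)$, fix a cycle $Z$ and a heaviest edge $e^\star\in Z$; saturation puts $e^\star\in T_j$ for some $j$, the cut characterisation makes $e^\star$ a lightest edge across the $G$--cut induced by $T_j\setminus\{e^\star\}$, and $Z$ crosses that cut a second time at some $f\ne e^\star$, forcing $w_f=w_{e^\star}$, whence $w\in\text{trop}(\M)$. For ``$\supseteq$'', if $\lambda(g)\in\text{trop}(\M)$ then no edge is the unique heaviest edge on a cycle, which is exactly the condition that every edge of $G$ lie in some minimum spanning tree of $(G,\lambda(g))$; perturbing $g$ inside $\seco(h)$ and invoking the Main Theorem (purity and full support of $\mathcal K(h)$, with maximal cones in bijection with realizable trees) produces, for generic perturbation directions $u$, realizable minimum spanning trees $T_u$ with $g\in\mathcal K(T_u)$ and $\bigcup_uE(T_u)=E$, so $g$ lies in a cell of $C$. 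To match cell structures I would then observe that along the relative interior of a cell of $C$ the system of coincidences $L_g(e)=L_g(f)$ is constant and, by the ``$\subseteq$''-argument applied to the family of \emph{all} realizable minimum spanning trees at $g$, coincides with the system of weight coincidences cutting out the cone of $\text{trop}(\M)$ through $\lambda(g)$ — i.e.\ the Bergman cone indexed by the flag of flats of $\M$ determined by $\lambda(g)$ — so that $\lambda$ sends the cells of $C$ bijectively and face--compatibly onto the cones of $\text{trop}(\M)$ meeting the image $\lambda(\seco(h))$.

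The step I expect to be the main obstacle is the realizability bookkeeping behind ``$\supseteq$'' and the cell bijection: one must show that when $\lambda(g)\in\text{trop}(\M)$ \emph{every} edge of $G$ is covered by a realizable minimum spanning tree at $g$, not merely by some minimum spanning tree, and dually that intersecting over exactly the realizable minimum spanning trees at a point recovers a single Bergman cone without spurious subdivision. Both rest on the local face structure of $\mathcal K(h)$ established earlier and on the way the subspace $\lambda(\R^{\A})$ of edge--length valuations meets the normal fan of the spanning--tree polytope of $G$.
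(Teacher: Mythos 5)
Your overall route --- push $\seco(h)$ into $\R^{E}$ via the edge-length forms, identify the cells of $C$ with intersections of MST-cones through the cut/cycle characterizations of minimum spanning trees, and match the result against the description of $\mathfrak{B}(\M(G))$ as the set of weights for which every edge lies in some minimum spanning tree --- is essentially the (implicit) route of the paper, which states the corollary without a detailed proof after introducing the map $l$ and $\hat{\mathfrak{B}}(\M(G))$. Your ``$\subseteq$'' direction is sound: the closed in-tree and cut-edge inequalities of each cone in a $G$-saturating collection force each of its trees to be a minimum spanning tree at $w=\lambda(g)$, so no edge is a loop of $\M(G)_w$ and $w$ lies in the Bergman fan.

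The genuine gap is the step you yourself flag, and your proposed fix does not close it. For ``$\supseteq$'' you need: whenever $g\in\seco(h)$ and $\lambda(g)\in\mathfrak{B}(\M(G))$, \emph{every} edge of $G$ lies in some realizable tree $T_<$ with $g\in\mathcal{K}(T_<)$; you propose to obtain this by perturbing $g$ generically inside $\seco(h)$ and collecting the resulting trees $T_u$, asserting $\bigcup_u E(T_u)=E$. This fails precisely at $Z$-instable edges: by Remark \ref{samecircuit}, distinct ridges with the same fundamental circuit have \emph{identically} equal edge-length forms on all of $\R^{\A}$, so no perturbation inside $\seco(h)$ can break such a tie, and a deterministic greedy outcome at the perturbed points may systematically omit one of the tied edges even though it belongs to a minimum spanning tree at $\lambda(g)$. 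Covering those edges is exactly what the modal semantics $\fil{g}=_\lozenge T_<$ in the definition of $\mathcal{K}(T_<)$ and the instability-range analysis of Sections \ref{sec:MSTFan}--\ref{sec:groupsandtrees} are for; your argument never invokes them, so the coverage claim --- and with it the concluding cell-by-cell identification with the Bergman cones of the fine subdivision --- remains an assertion rather than a proof. A secondary unproven claim: that $\ker\lambda$ equals the space of affine functions on $\A$; this can fail, e.g.\ for points of $\A$ lying in no fundamental circuit of a dual edge (such as points not used in $\subdiv(h)$), so the ``embedding'' must be read modulo the appropriate kernel, as the paper's loose phrasing also tacitly does.
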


\subsection*{Motivation from population genetics} 
Regular subdivisions admit a reformulation in terms of linear optimization: A subdivision $\subdiv$ of a point configuration $\A$ with \emph{support} $|\subdiv|:=\bigcup_{\sigma\in\subdiv}\sigma$ is regular if and only if there is a convex support function $f_\subdiv\colon |\subdiv|\to \R$, whose domains of linearity coincide with the maximal cells of $\subdiv$, cf. \cite[Section 1.F]{BrunsGubeladze}.

This interplay of optimization and polyhedral combinatorics was the grounding for \cite{Eble2020} and \cite{Eble2019}, where the theory of regular subdivisions was applied to statistical population genetics in order to detect and study non-additive mutational gene effects, called \emph{epistasis}. Note that the tropical edge length (\ref{edgelength}) measures the degree to which the points $(x,h(x))$, where $x$ runs through the vertex set of $s\cup t$, fail to lie on a hyperplane in $\R^{n+1}$ and thus provides an adequate measure for epistasis.

Put in a biological wording, the point set $\A$ represents an $n$-loci  biallelic (or multi-allelic)  system of genotypes, i.e. $\A=\{0,1\}^n$  or more generally, $\A$ is the vertex set of a product of simplices, and the function $h\colon \A\to\R$ is a genotype-phenotype map which showcases some experimentally obtained physical quantity. Since in this case the support function $f_\subdiv$ can be chosen to record \emph{fittest populations} on $\A$ with respect to $h$, cf. Section \ref{sec:EpistaticInteractions} and \cite{BPS2007}, so do the maximal cells of $\subdiv(h)$. With a slight modification of the tropical edge length (\ref{edgelength}) named \emph{epistatic weight}, the minimum spanning tree $\fil{h}$ was introduced as \emph{epistatic filtration} in \cite{Eble2019}. 
The edges of $\fil{h}$, especially the \emph{significant} ones satisfying a statistical $p$-test, depict a selective choice of \emph{epistatic interactions} and proved to be able to reveal biological relevant information. 
\subsection*{Outline of the paper} In Section \ref{sec:Regular Subdivisions} we review facts on regular subdivisions and tropical geometry. In Section \ref{sec:MSTFan} we introduce the MST-fan of some given regular subdivision. Section \ref{sec:algo} provides an algorithm for computing the single MST-cones from trees and in Section \ref{sec:groupsandtrees} we discuss the effect of changing the edge order of the spanning trees. In Section \ref{sec:TropGeom} we reinterpret the MST-fan in terms of matroid theory and tropical geometry. Section \ref{sec:EpistaticInteractions} shortly explains some use-case for applying tropical geometry to biology from where the motivation to study MST-fan structures initially rebounded. In Section \ref{sec:computations} we provide some computational results.

I am indebted to Michael Joswig and Marta Panizzut for many helpful discussions.

\section{Regular subdivisions and tropical hypersurfaces}\label{sec:Regular Subdivisions}

Let $\A\subset \R^n$ be a point configuration with point labels $J$. Following \cite[Section 2.3]{SantosTriangulations}, a \emph{polyhedral subdivision} of $\A$ is a collection $C$ of subsets of $J$, such that the convex hulls of $C$  cover $\A$ and intersect nicely. A \emph{regular subdivision} of $\A$ is a polyhedral subdivision of $\A$, which is induced by a height function $h\colon \A\to \R$  taking the upper facets of the lifted configuration $\conv(\A^h):=\conv\{(v,h(v))\colon v\in \A\}$ with the induced label set. For instance, the 3-cube has 74 triangulations  (6 triangulations up to symmetry) and all of them are regular. The 4-cube has 92,487,256  triangulations (247,451 triangulations up to symmetry) and 87,959,448 of them are regular, cf. \cite{Huggins, Pournin} for the original results and  \cite{JordanJoswigKastner} for a recomputation  using parallelized reverse search.

\begin{remark}\label{rmk:ConvexSupportFunction}
The regularity of a polyhedral subdivision can be characterized in terms of \emph{convex functions} as follows, cf. \cite[Section 1.F]{BrunsGubeladze}. For a given convex set $S\subset\R^n$, a function $f\colon S\to \R$ is called \emph{convex} if for any two points $x,y\in S$ the graph of the restriction of $f$ to the line segment $[x,y]\subset S$ lies below the line segment $[(x,f(x)),(y,f(y))]$ in $\R^{n+1}$. Now, a polyhedral subdivision $\subdiv$ of $\A$ is regular precisely if it has a convex support function $f_\subdiv$, i.e. the convex hulls of the maximal cells in $\subdiv$ are the regions of linearity of $f_\subdiv$.
\end{remark}

\subsection{The parameter space for regular subdivision}\label{sec:RegSub}
Let $\subdiv(h)$ be a regular triangulation of $\A$. The parameter space of all $g\in \R^\A$ inducing the same triangulation $\subdiv(g)=\subdiv(h)$ is a full-dimensional cone in $\R^\A$, called \emph{secondary cone} $\seco(h)$ of $\subdiv(h)$, cf. \cite[Section 5.2]{SantosTriangulations}. 
It can be understood by looking at local folding constraints on $h$, which push the lifts $(s,t)^h$ of bipyramids $(s,t)$ in $\A$ to the upper hull of $\A^h$. For a given simplex $s=(v_1,\cdots, v_{n+1})$ in $\A$, the conditions on a height function $h\in \R^\A$ to show $s$ in its induced subdivision $\subdiv(h)$ are installed by the linear \emph{folding constraints} $\Psi_{s,j}\geq 0$ for $v_j\in \A\setminus s$, where 
\begin{align}\label{eq:FoldingConstraints}
\Psi_{s,j}(h):=\text{sign}\big(\det\big(\tilde{L}_h(s)\big)\big)\cdot \det \begin{pmatrix}
    1 & v_{1,1} & v_{1,2} & \ldots & v_{1,n} & h(v_1) \\
    1 & v_{2,1} & v_{2,2} &  \ldots & v_{2,n} & h(v_2) \\
    \vdots  & \vdots &  \vdots & \vdots & \vdots &\vdots \\
    1 & v_{n+1,1} & v_{n+1,2} & \ldots & v_{n+1,n}  & h(v_{n+1})\\
    1 & v_{j,1} & v_{j,2} & \ldots & v_{j,n}  & h(v_{j})
  \end{pmatrix}
\end{align}
and where $\tilde{L}_h(s)$ denotes the right matrix of (\ref{eq:FoldingConstraints}) with the last row and column omitted. Hence, the full-dimensional cone $H_s:=\bigcap_{v_j\in \A\setminus s} \{\Psi_{s,j}\geq 0\}\subset\R^\A$ is the parameter space for $s$ to appear in the induced triangulation.  Geometrically, this system requires $(\A-s)^h$ to lie below the hyperplane spanned by $s^h$. 
%Note that folding constraints as in (\ref{eq:FoldingConstraints}) can be formulated in terms of tropical edge weights.

\subsection{Tropical hypersurfaces}

Tropical Geometry is a discrete variant of Algebraic Geometry and is established over the tropical  semifield $(\R\cup\{-\infty\},\oplus, \odot)$, where $a \oplus b := \max(a,b)$ and $a\odot b:=a+b$. Similar to ordinary Algebraic Geometry, tropical zero sets of tropical polynomials are the central geometric objects of study. Given a set of exponents $S\subset\Z^d$ and coefficients $h(u)\in\R$ for $u\in S$, the associated $d$-variate \emph{tropical polynomial} reads
\[F_h:=\bigoplus_{u\in S} h(u)\odot x_1^{u_1} \odot x_2^{u_2}\odot\ldots\odot x_d^{u_d}=\max\limits_{u\in S}\{h(u)+\langle u,x \rangle\}\]
and can be understood as a function $F_h(x)\colon \R^d\to \R$ in $x$. The \emph{tropical hypersurface} $V(F_h)\subset\R^d$ is defined to be the locus where $F_h$ tropically vanishes, i.e. $V(F_h)$ is the set of all points $x\in\R^d$ where at least two tropical summands of $F_h$ evaluate at $x$ to $F_h(x)$.

\begin{proposition} Let $(s,t)$ be a bounded edge of $V(F_h)$. Then the tropical edge length $\ell_h(s,t)$ equals the euclidean distance between the two $0$-cells  $s$ and $t$ of $V(F_h)$.
\end{proposition}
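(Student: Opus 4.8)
\textit{Proof plan.}
The plan is to compute both sides of the asserted identity directly from the lifted configuration $S^h=\{(u,h(u)):u\in S\}$ and to see that they agree. Write $\rho$ for the codimension-one cell of the regular subdivision $\subdiv(h)$ dual to the edge $(s,t)$, and let $\sigma_s,\sigma_t$ be the two maximal cells of $\subdiv(h)$ sharing $\rho$ as a facet, labelled so that $s$ is the $0$-cell dual to $\sigma_s$ and $t$ the one dual to $\sigma_t$. First I would record the dictionary between $0$-cells and facets of the upper hull: the $0$-cell $s$ is the unique point of $\R^d$ at which all linear forms $h(u)+\langle u,\cdot\rangle$ with $u$ a vertex of $\sigma_s$ attain the common value $F_h(s)$; equivalently, $-s$ is the gradient of the affine function $a_s\colon\R^d\to\R$ whose graph carries the upper facet $\sigma_s^h\subset\conv(S^h)$, and likewise for $t$ and $a_t$.

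Next I would locate the segment $[s,t]$. Since the forms indexed by the vertices of $\rho$ remain tied for the maximum along the whole edge, the difference $a_s-a_t$ is an affine function on $\R^d$ vanishing precisely on $\mathrm{aff}(\rho)$; its linear part is $t-s$, so $t-s\perp\mathrm{aff}(\rho)$ and $a_s(x)-a_t(x)=\langle t-s,\,x-p\rangle$ for every $p\in\mathrm{aff}(\rho)$. Evaluating at a vertex $w_s$ of $\sigma_s$ not lying on $\rho$ gives, on the left, $a_s(w_s)-a_t(w_s)=h(w_s)-a_t(w_s)=-\delta$, where $\delta:=a_t(w_s)-h(w_s)\ge 0$ is the vertical gap by which the lifted point $(w_s,h(w_s))$ sits below the supporting hyperplane of $\sigma_t^h$; on the right it equals $-\|s-t\|\cdot\operatorname{dist}\big(w_s,\mathrm{aff}(\rho)\big)$. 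Hence $\|s-t\|=\delta/\operatorname{dist}\big(w_s,\mathrm{aff}(\rho)\big)$. Here $\delta>0$ because $w_s\notin\sigma_t$, and this strict positivity — together with the existence of such a $w_s$, i.e. the edge being genuinely one-dimensional — is exactly where the hypothesis that $(s,t)$ is an honest bounded edge enters.

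It remains to express $\delta$ and the denominator through determinants so as to recover the tropical edge length. Viewing $\tilde{L}_h(s,t)$ as $\pm(d+1)!$ times the volume of the lifted bipyramid $\conv\big(\{(w_s,h(w_s)),(w_t,h(w_t))\}\cup\rho^h\big)$ and slicing this $(d+1)$-simplex over the base $\sigma_t^h$ (which lies in the tilted hyperplane $\{x_{d+1}=a_t(x)\}$) with apex $(w_s,h(w_s))$: the tilt multiplies the $d$-volume of the base by $\sqrt{1+\|t\|^2}$ and divides the height of the apex by the same factor, so the two cancel in $\tfrac{1}{d+1}(\text{base volume})\cdot(\text{height})$, leaving $|\tilde{L}_h(s,t)|=\nvol(\sigma_t)\cdot\delta$; the same identity drops out of a cofactor expansion of the determinant in (\ref{edgelength}) along its last column. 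Substituting this, together with the elementary relation between $\operatorname{dist}\big(w_s,\mathrm{aff}(\rho)\big)$ and the normalized volumes of $\sigma_s$ and $\rho$ (and the symmetric identity obtained by interchanging $s$ and $t$) into the previous display, one reads off $\|s-t\|=\ell_h(s,t)$.

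The geometry here is routine; the main obstacle is the normalization bookkeeping: one has to set up the lattice normalizations hidden in $\ell_h(s,t)$ and in ``the lattice distance of $w_s$ from the affine lattice span of $\rho$'' so that all the normalized-volume factors produced by the volume-slicing cancel exactly, and one has to keep track of the sign (the orientation of the bipyramid $(s,t)$), which is again governed by the assumption that $(s,t)$ is a genuine bounded edge, so that the supporting hyperplanes of $\sigma_s^h$ and $\sigma_t^h$ are genuinely distinct.
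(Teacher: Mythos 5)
Your reduction of both sides to the vertical gap $\delta$ is correct as far as it goes: with $\sigma_s,\sigma_t$ the adjacent maximal simplices, $\rho$ their common ridge and $w_s$ the vertex of $\sigma_s$ off $\rho$, one indeed has $\lVert s-t\rVert=\delta/\operatorname{dist}(w_s,\mathrm{aff}(\rho))$, and the cofactor expansion you describe gives $|\ell_h(s,t)|=\nvol(\sigma_t)\cdot\delta$, with $\nvol$ the lattice-normalized volume. The genuine gap is exactly the step you defer to ``normalization bookkeeping'': combining your two identities yields $\lVert s-t\rVert=|\ell_h(s,t)|/\bigl(\nvol(\sigma_t)\operatorname{dist}(w_s,\mathrm{aff}(\rho))\bigr)$, and the factor $\nvol(\sigma_t)\operatorname{dist}(w_s,\mathrm{aff}(\rho))$ does not cancel --- it is not $1$ in general, so no bookkeeping can make the two sides agree as stated. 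Already for $S=\{0,2,4\}\subset\Z$ with heights $(0,\epsilon,0)$ the two dual vertices are $\mp\epsilon/2$, at Euclidean distance $\epsilon$, while the determinant of (\ref{edgelength}) equals $4\epsilon$; and even in the unimodular situation closest to the paper, $S=\{0,1\}^2$ with $h(0,0)=h(1,1)=0$ and $h(1,0)=h(0,1)=\epsilon$, the dual vertices $(\mp\epsilon,\mp\epsilon)$ are at Euclidean distance $2\sqrt{2}\,\epsilon$ whereas the determinant is $2\epsilon$. So the proposition in its literal Euclidean reading is false, and your plan's final ``read off $\lVert s-t\rVert=\ell_h(s,t)$'' cannot be carried out. (Note also that the paper states this proposition without any proof, so there is no argument of the author's against which to compare yours.)

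What your computation does establish, once the factors are kept, is the lattice version of the claim, which is what the cited terms ``lattice length'' and ``tropical edge length'' refer to: writing $u$ for the primitive integer normal of $\mathrm{aff}(\rho)$, you get $s-t=\lambda u$ with $|\lambda|=\delta/|\langle u,w_s-p\rangle|$, so the lattice length $|\lambda|$ of the edge equals $|\ell_h(s,t)|$ divided by the product of the normalized volume of $\rho$ (inside its affine lattice) and the lattice heights of $w_s$ and $w_t$ over $\mathrm{aff}(\rho)$; in particular it equals $|\ell_h(s,t)|$ on the nose precisely when $\sigma_s$ and $\sigma_t$ are unimodular simplices. The constructive fix is therefore to prove this lattice-length identity (or the general formula with the explicit correction factor $\nvol(\sigma_t)\operatorname{dist}(w_s,\mathrm{aff}(\rho))$), and to flag that the statement as printed needs ``Euclidean distance'' replaced by ``distance in lattice lengths'' together with a unimodularity hypothesis on the two adjacent cells.
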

%\begin{proof} \textcolor{red}{Wegintegral, Ergebnis %ist Funktionaldeterminante oder Statement anpassen.}
%\end{proof}
An important result of tropical geometry states that the $k$-dimensional cells of the hypersurface $V(F_h)$  are in bijection with the $(n-k)$-dimensional cells of the regular subdivision $\subdiv(h)$ for $0\leq k<n$, see \cite[Theorem 1.13]{ETC} for instance. This bijection is inclusion reversing and hence establishes a duality between the combinatorics of $V(F_h)$ and the combinatorics of $\subdiv(h)$. The bounded subcomplex of $V(F_h)$ is called \emph{tight span} of $\subdiv(h)$, cf. \cite{HerrmannJoswig+TightSpans}. The tight span showcases the adjacency relations of interior cells of $\subdiv(h)$ as will be indicated in Figures \ref{fig:trophyp} and \ref{fig:khan3dim}. The $1$-cells, i.e. the edges, of the tight span yield the dual graph $\Gamma(h)$ of $\subdiv(h)$ with natural edge weights given by the tropical edge lengths as in (\ref{edgelength}).
\begin{figure}[h!]
\centering
\includegraphics[scale=1.6]{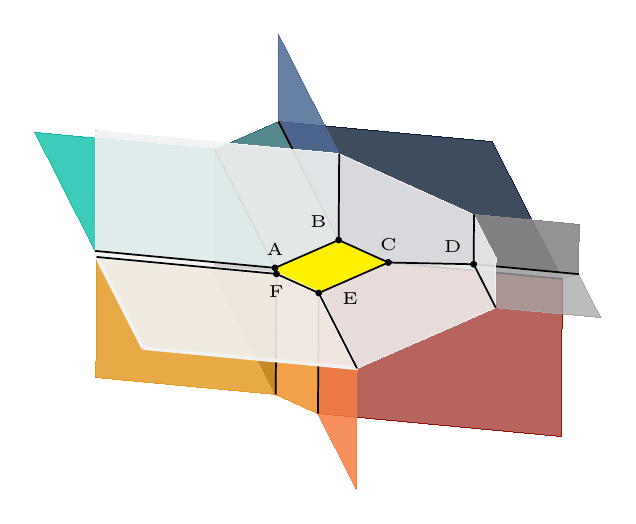}
\caption{The tropical hypersurface associated to a height function on $\A=S=\{0,1\}^3$, which was obtained in a population genetics experiment on Esherichia
coli mutations in the genes topA, spoT, and pykF running Lenski’s long-term evolution experiment, cf.  \cite{Khan1193} and Section \ref{sec:EpistaticInteractions}. The bounded subcomplex of the hypersurface, i.e. the tight span of the regular subdivision $\subdiv(h)$ of $\A$ introduced in \cite{HerrmannJoswig+TightSpans} generalizing the tight spans of finite metric spaces from \cite{Dress} and \cite{Isbell}, has six vertices enumerated $A$ to $F$, six bounded edges $(A,B),(B,C),(C,D),(C,E),(E,F)$ and $(A,F)$ and one bounded $2$-dimensional face colored yellow.}
\label{fig:trophyp}
\end{figure}
\newpage

% Khan_0ss0s
\begin{figure}[h!]
     \centering
     \begin{subfigure}[b]{0.44\textwidth}
         \centering
         \includegraphics[width=\textwidth]{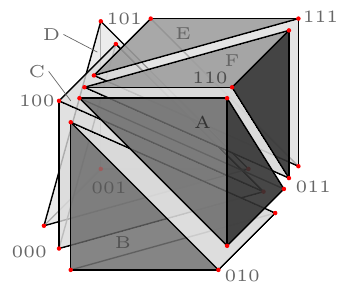}
         \caption{Triangulation $\subdiv(h)$.}
         \label{fig:y equals x}
     \end{subfigure}
     \hfill
     \begin{subfigure}[b]{0.55\textwidth}
         \centering
         \includegraphics[width=\textwidth]{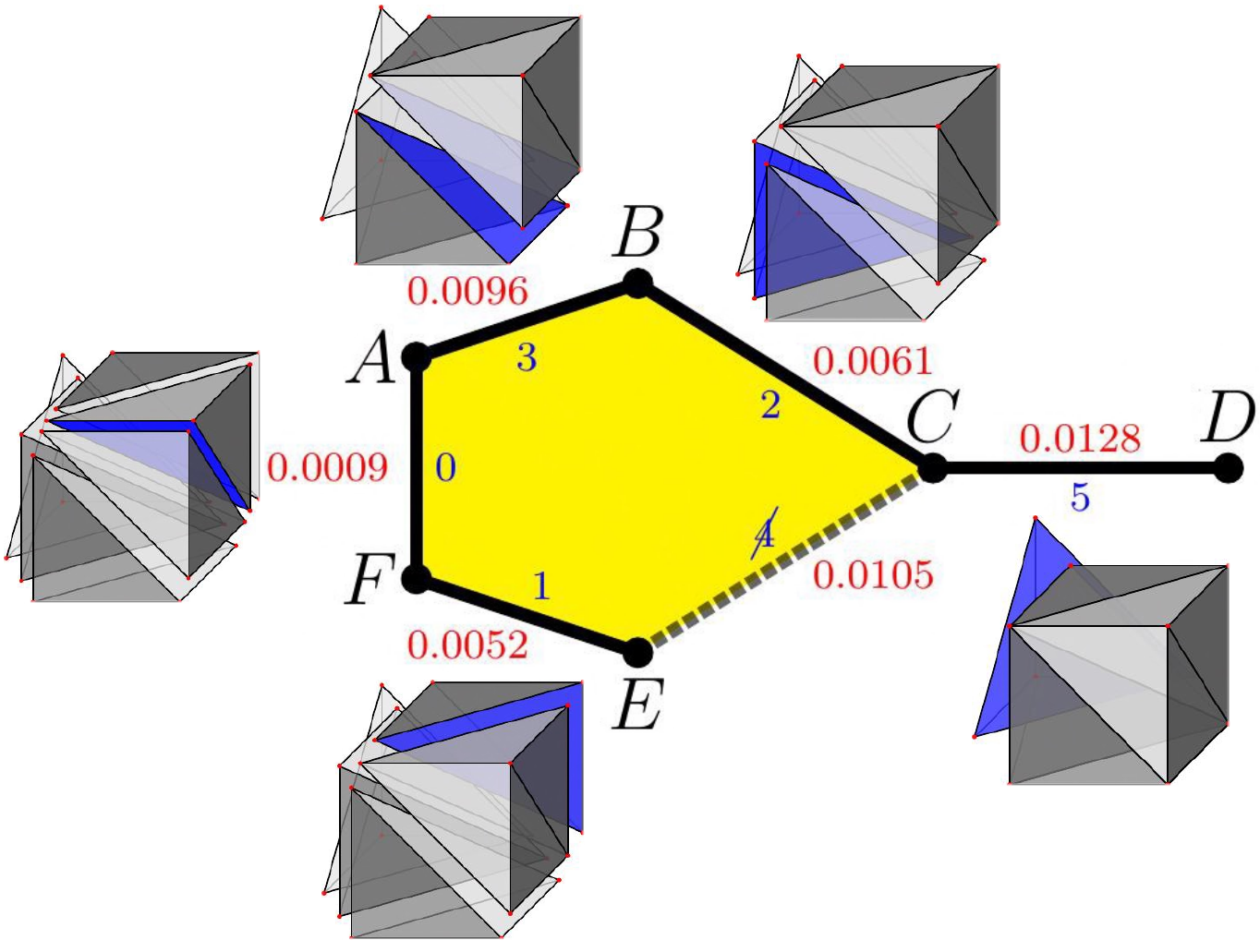}
         \caption{Tight span of $\subdiv(h)$, tropical edge weights (red).}
         \label{fig:three sin x}
     \end{subfigure}
%     \hfill
%     \begin{subfigure}[b]{0.33\textwidth}
%         \centering
%         \includegraphics[width=\textwidth]{figures/filtration_khanraw_norm_0ss0sALLRED.pdf}
%		\caption{Epistatic filtration of~$h$.}         
%         \label{fig:five over x}
%     \end{subfigure}
        \caption{Running example.
        (A) The regular triangulation $\subdiv(h)$ dual to Figure \ref{fig:trophyp}. (B) The tight span of $\subdiv(h)$. Its $1$-skeleton, i.e. with the yellow $2$-cell removed, is the dual graph $\Gamma(h)$ of $\subdiv(h)$ and  records the adjacency relations among the maximal simplices of $\subdiv(h)$. 
Tropical edge weights of the dual edges are in red, their order with respect to how the greedy algorithm choses the edges are in blue. 
Note that the edge $(C,E)$ with label $4$ is omitted. Geometrically, the glueing information of this edge is already recorded in the partition $ABCEF|D$ of the cube in (A), cf. Section~\ref{sec:EpistaticInteractions}.
        }
        \label{fig:khan3dim}
\end{figure}

\newpage
\section{The MST-Fan $\mathcal{K}(h)$}\label{sec:MSTFan}
Let again $\A$ be a fixed finite point configuration in $\R^n$. In order to study the class of realizable minimum spanning trees of $\Gamma(h)$ for varying height functions $h\colon\R^\A\to\R$, we first consider the tropical edge length as a linear functional modulo sign.

\begin{definition}\label{def:edgelinearform}
The  \emph{edge length linear form} $\ell_\bullet(r)\colon \R^\A\to\R$ of an $\A$-ridge $r=(s,t)$ is given~by the determinant $\det(\tilde{L}_\bullet(r))$ of the matrix $\tilde{L}_\bullet(r)$ from equation (\ref{edgelength}) in Section \ref{sec:intro}. Thus, evaluating $\ell_\bullet(r)$ at the height function $h\in\R^\A$ yields the tropical edge length $L_h(r)$ up to sign.
\end{definition}

\subsection{Linear forms and circuits}\label{matroids}A matroid $\mathcal{M}=(E,\mathcal{B})$ on a finite ground set $E$ is given by a non-empty collection $\mathcal{B}\subset 2^\M$ of subsets of $E$, called the \emph{set of bases} of $\mathcal{M}$, which fulfills the following \emph{base exchange axiom} (BE), cf. \cite[Section 1.2]{Oxley}:
\begin{align*}
\textbf{(BE) } & \text{For any two members $B_1$ and $B_2$ of $\mathcal{B}$ and any element $x\in B_1\setminus B_2$ there is  } \\ & \text{an element $y\in B_2\setminus B_1$ such that $(B_1\setminus\{x\})\cup \{y\}$ is a member of $\mathcal{B}$\enspace .}
\end{align*} 
A collection $\mathcal{B}$ of bases gives rise to a system $\mathcal{I}\subset 2^\mathcal{M}$ of \emph{independent sets} of $\M$ collecting all subsets of $I\subset E$ lying in some base $B_I\in\mathcal{B}$ and, vice versa, bases are independent sets of maximal cardinality. For our purposes, we consider two matroids. First, given an undirected graph $G=(V,E)$, the \emph{cycle matroid} $\M(G)$ on the ground set $E$ has the spanning trees of $G$ as its bases. Second, the \emph{vector matroid} $\M(\A)$ on the ground set $\A$ depicts affine independences over $\A$, i.e. the bases of $\M(\A)$ are full-dimensional simplices spanned by points of $\A$.

%\begin{definition} A \emph{bipyramid} $(s,t)$ in $\mathbf{A}$ is a pair of adjacent $n$-simplices 
%\[s=\conv\{v_1,\ldots, v_{n+1}\} \text{~and~} t=of\conv\{v_2,\ldots, v_{n+2}\} \]
%with vertices $v_1,\ldots,v_{n+2}$ in $\mathbf{A}$.
%Their common $(n-1)$-dimensional face $s\cap t$ is called the \emph{ridge} $r(s,t)$ of the bipyramid $(s,t)$.
%\end{definition}

For a given $\A$-ridge $r$, the coefficients $(x_i)_{i\in \A}$ of $\ell_\bullet(r)$ regarded as a row vector in $\R^\A$ are obtained by Laplacian expansion of $\tilde{L}_\bullet(r)$ along its $i$-th row and last column. The support $\text{supp}(\tilde{L}_\bullet(r)):=\{i\in \A\colon x_i\neq 0\}$ is seen by the matroid $\M(\A)$ as we will show~now.

\begin{lemma}\cite[Corollary 1.2.6]{Oxley}%, 1.1.6
\label{lmm:fundamental_circuit} Every $\A$-ridge $r=(s,t)$ contains a unique circuit $Z^r$.
\end{lemma}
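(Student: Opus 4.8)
The plan is to read the statement as the classical fundamental-circuit lemma for the vector matroid $\M(\A)$ --- which is exactly the cited Corollary 1.2.6 of Oxley --- and to check that its hypotheses hold in the present setting. First I would record that, viewed as a subset of $\A$, the ridge $r=(s,t)$ is the point set $s\cup t=\{v_1,\ldots,v_{n+2}\}$; it has $n+2$ elements and is dependent in $\M(\A)$ because $n+2$ points in $\R^n$ are affinely dependent. Since $s$ is a maximal cell of the regular triangulation $\subdiv(h)$, its vertex set $\{v_1,\ldots,v_{n+1}\}$ is a full-dimensional simplex, hence a basis of $\M(\A)$; thus $r=s\cup\{v_{n+2}\}$ is a basis together with one further ground-set element $v_{n+2}\in\A\setminus s$, and moreover $v_{n+2}\notin s$ and $v_1\notin t$ (otherwise $s=t$).

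Existence of a circuit contained in $r$ is then immediate from the dependence of $r$. For its location I would use that every proper subset of the independent set $s$ is again independent: no circuit contained in $r$ can lie inside $s$, so every circuit of $r$ contains $v_{n+2}$; applying the same argument to the basis $t=\{v_2,\ldots,v_{n+2}\}$ of $\M(\A)$, every circuit of $r$ also contains $v_1$.

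For uniqueness I would argue by circuit elimination: if $Z_1\neq Z_2$ were two circuits contained in $r$, then both contain $v_{n+2}$ by the previous step, so the circuit-elimination axiom yields a circuit $Z_3\subseteq(Z_1\cup Z_2)\setminus\{v_{n+2}\}\subseteq s$, contradicting independence of $s$. Hence the circuit of $r$ is unique; we denote it $Z^r$. It is simultaneously the fundamental circuit $C(v_{n+2},s)$ and $C(v_1,t)$.

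I do not anticipate a genuine obstacle: the whole argument rests only on the definitions of a regular triangulation (maximal cells are full-dimensional simplices) and of the vector matroid $\M(\A)$, together with the circuit-elimination axiom. The lemma is essentially bookkeeping --- fixing the notation $Z^r$ and the observation that $Z^r$ is a fundamental circuit --- and will be used immediately afterwards to identify $\text{supp}(\tilde{L}_\bullet(r))$ through the Laplace expansion of $\tilde{L}_\bullet(r)$.
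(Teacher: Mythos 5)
Your proof is correct and follows the paper's intended route: the paper offers no argument of its own beyond the citation to Oxley's Corollary 1.2.6 (the fundamental-circuit lemma for a basis plus one extra element), and you identify the statement as exactly that, check the hypotheses in this setting ($s$ is a basis of $\M(\A)$ since it is a full-dimensional simplex, and $v_{n+2}\notin s$), and supply the standard circuit-elimination argument for uniqueness. Nothing further is needed.
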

The circuit $Z^r$ of Lemma \ref{lmm:fundamental_circuit} is called the \emph{fundemental circuit} of $r$.
Now, fix an $\A$-ridge $r=(s,t)$ and denote the coefficients of the linear form $\ell_\bullet(r)$ by $(d_v)_{v\in \A}$.

\begin{proposition}\label{lmm:circuitsupport} The circuit $Z^r$ equals the support $
\{v\in\A\colon d_v\neq 0\}$ of $\ell_\bullet(r)$.
\end{proposition}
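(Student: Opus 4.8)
The plan is to read the coefficients $(d_v)_{v\in\A}$ of $\ell_\bullet(r)$ off a cofactor expansion of $\det(\tilde{L}_\bullet(r))$ and then match their vanishing pattern against the standard criterion for an element to lie in a circuit. Label the vertices of $s\cup t$ by $v_1,\dots,v_{n+2}$ as in equation (\ref{edgelength}); since $t=\conv(v_2,\dots,v_{n+2})$ arises from $s=\conv(v_1,\dots,v_{n+1})$ by exchanging $v_1$ for $v_{n+2}$, the set $s\cup t$ is precisely $\{v_1,\dots,v_{n+2}\}$, and $s$, being a full-dimensional simplex of a triangulation of $\A$, is a basis of $\M(\A)$, so $\M(\A)$ has rank $n+1$ (recall from Section \ref{matroids} that a subset of $\A$ is independent in $\M(\A)$ exactly when the associated points are affinely independent, i.e. their liftings $(1,v)\in\R^{n+1}$ are linearly independent). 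Thus $s\cup t=s\cup\{v_{n+2}\}$ has $n+2$ elements and rank $n+1$, hence corank $1$, and by Lemma \ref{lmm:fundamental_circuit} a unique circuit $Z^r$.

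First I would expand the $(n+2)\times(n+2)$ determinant $\det(\tilde{L}_\bullet(r))$ along its last column, which carries the entries $h(v_1),\dots,h(v_{n+2})$. This presents $\ell_\bullet(r)$ as $\sum_{v\in\A}d_v\,h(v)$ with $d_v=0$ for $v\notin\{v_1,\dots,v_{n+2}\}$, and with $d_{v_i}=\pm M_i$ for $1\le i\le n+2$, where $M_i$ is the minor of $\tilde{L}_\bullet(r)$ obtained by deleting its $i$-th row and last column; equivalently, $M_i$ is the $(n+1)\times(n+1)$ determinant whose rows are the liftings $(1,v_{j,1},\dots,v_{j,n})$ with $j\neq i$. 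Hence $d_{v_i}\neq 0$ if and only if $M_i\neq 0$, which holds precisely when the $n+1$ points of $(s\cup t)\setminus\{v_i\}=\{v_j:j\neq i\}$ are affinely independent; as this set has cardinality $n+1=\operatorname{rank}\M(\A)$, that is the same as saying $(s\cup t)\setminus\{v_i\}$ is a basis, and in particular an independent set, of $\M(\A)$.

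The remaining step is to invoke the elementary fact that, for $v_i\in s\cup t$, one has $v_i\in Z^r$ if and only if $(s\cup t)\setminus\{v_i\}$ is independent in $\M(\A)$. Indeed, if $v_i\notin Z^r$ then $Z^r\subseteq(s\cup t)\setminus\{v_i\}$, so the latter is dependent; conversely, if $(s\cup t)\setminus\{v_i\}$ is dependent it contains a circuit, which by uniqueness of the circuit of $s\cup t$ must equal $Z^r$, forcing $Z^r\subseteq(s\cup t)\setminus\{v_i\}$ and hence $v_i\notin Z^r$. Chaining the equivalences, for $v_i\in s\cup t$ one obtains $d_{v_i}\neq 0\iff(s\cup t)\setminus\{v_i\}\text{ independent}\iff v_i\in Z^r$; and for $v\notin s\cup t$ both $d_v=0$ and $v\notin Z^r$ hold, the latter since $Z^r\subseteq s\cup t$. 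Therefore $\{v\in\A:d_v\neq 0\}=Z^r$.

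I do not expect a genuine obstacle: the argument is bookkeeping of the Laplace expansion together with the deletion characterization of circuit elements. The two points worth a moment's care are that the precise signs in the expansion are irrelevant, since only the non-vanishing of each $d_{v_i}$ matters, and that a size-$(n+1)$ independent set in the rank-$(n+1)$ matroid $\M(\A)$ is automatically a basis --- this is exactly what lets the conditions ``$\{v_j:j\neq i\}$ affinely independent'' and ``$v_i$ belongs to the unique circuit $Z^r$'' be identified.
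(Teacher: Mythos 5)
Your proof is correct and follows essentially the same route as the paper: identify each coefficient $d_{v_i}$ (via the Laplace expansion along the last column) with the minor whose non-vanishing means $(s\cup t)\setminus\{v_i\}$ is affinely independent, and then use the uniqueness of the circuit $Z^r$ in $s\cup t$ (Lemma \ref{lmm:fundamental_circuit}) to conclude $d_{v_i}\neq 0\iff v_i\in Z^r$. You merely spell out the bookkeeping (the cofactor identification and the case $v\notin s\cup t$) that the paper's two-line argument leaves implicit.
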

\begin{proof} If $d_v=0$ holds, then $\text{ker}(\ell_\bullet(r))$ yields relations in $Z^r\subset(s\cup t)\setminus \{v\}$. If $d_v\neq 0$ holds, then $Z^r\nsubseteq \left((s\cup t)\setminus\{w\}\right)\in\mathcal{I}$, and thus we get $v\in Z^r$.
\end{proof}

\begin{remark}\label{samecircuit}

Consequently, one may have $\ell_h(r)=\ell_h(\tilde{r})$ for distinct ridges $r$ and $\tilde{r}$ and generic $h$. In this case the fundamental circuits of $r$ and $\tilde{r}$ need to coincide.
\end{remark}

\begin{proposition} Every circuit $Z$ of $\M(\A)$ equals $Z^r$ for some $\A$-ridge $(s,t)$.
\end{proposition}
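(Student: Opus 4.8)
The plan is to reverse the construction behind Lemma~\ref{lmm:fundamental_circuit}: given an abstract circuit $Z$ of $\M(\A)$, I will produce two full-dimensional simplices $s,t$ with vertices in $\A$ that share a common facet and whose associated fundamental circuit is exactly $Z$. First I would fix the affine dependence certifying that $Z$ is a circuit, $\sum_{v\in Z}\lambda_v v=0$ with $\sum_{v\in Z}\lambda_v=0$; by minimality $\lambda_v\neq 0$ for every $v\in Z$, so $Z$ decomposes into its Radon classes $Z^{+}=\{v:\lambda_v>0\}$ and $Z^{-}=\{v:\lambda_v<0\}$, both nonempty. I would then pick two distinct elements $a,b$ lying in the same Radon class — possible as soon as $|Z|\geq 3$; the case $|Z|=2$ means two coinciding points of $\A$ and I would dispose of it by the standing assumption that $\A$ is a set of distinct points (under which all circuits have size at least $3$).

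Because $\A$ affinely spans $\R^n$, the set $Z\setminus\{a\}$, being a proper subset of the circuit $Z$, is independent and extends to a basis $B$ of $\M(\A)$, i.e.\ a full-dimensional simplex. Since $Z$ is dependent while $B$ is not, $a\notin B$, and $b\in Z\setminus\{a\}\subseteq B$, whence also $Z\setminus\{a,b\}\subseteq B\setminus\{b\}$. Set $F:=B\setminus\{b\}$, an affinely independent set of $n$ points, $s:=B=F\cup\{b\}$ and $t:=F\cup\{a\}$. Now $B\cup\{a\}$ has $n+2$ elements and rank $n+1$, hence a one-dimensional space of affine dependences and a unique circuit; as $Z$ is a circuit contained in $B\cup\{a\}$, that circuit is $Z$, i.e.\ $Z$ is the fundamental circuit of $a$ with respect to $B$. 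Since $b\in Z$, the exchange property of fundamental circuits yields that $t=(B\setminus\{b\})\cup\{a\}$ is again a basis. So $s$ and $t$ are full-dimensional simplices meeting in the facet $F$, the pair $r=(s,t)$ is an $\A$-ridge with $s\cup t=F\cup\{a,b\}$, and by Lemma~\ref{lmm:fundamental_circuit} its unique circuit $Z^r$ is the circuit $Z\subseteq s\cup t$; this would complete the proof.

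If one also insists that the two simplices of an $\A$-ridge sit on opposite sides of their common facet — as adjacent simplices in a triangulation do — then choosing $a$ and $b$ in the same Radon class is exactly what is needed: taking an affine functional $\phi$ that vanishes on $\text{aff}(F)\supseteq\text{aff}(Z\setminus\{a,b\})$ and evaluating it on the dependence gives $\lambda_a\phi(a)+\lambda_b\phi(b)=0$, so $\phi(a)$ and $\phi(b)$ have opposite signs (both are nonzero since $a,b\notin\text{aff}(F)$) precisely because $\lambda_a$ and $\lambda_b$ have the same sign. The main obstacle is this last piece of bookkeeping — checking that the constructed pair really qualifies as an $\A$-ridge in the sense used here; beyond that, the argument is just the matroid exchange axioms plus the harmless standing hypothesis that $\A$ affinely spans $\R^n$ (without which there are no full-dimensional simplices, hence no $\A$-ridges, at all).
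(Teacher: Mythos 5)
Your proof is correct, and its skeleton is the same as the paper's: delete one element of $Z$, extend to a basis, swap one element of $Z$ back in, and use the fact that an $(n+2)$-point, rank-$(n+1)$ subconfiguration has a unique circuit (Lemma~\ref{lmm:fundamental_circuit}) to see that the second simplex is again a basis and that the fundamental circuit of the resulting pair is $Z$. Where you go beyond the paper is the sign bookkeeping: the paper exchanges an \emph{arbitrary} $z_t\in Z\setminus\{z_1\}$, which only produces two full-dimensional simplices sharing a facet; if $z_1$ and $z_t$ lie in opposite Radon classes of $Z$, your own functional computation $\lambda_a\phi(a)+\lambda_b\phi(b)=0$ shows the two apexes land on the \emph{same} side of the common facet, so the simplices overlap and cannot be adjacent cells of any subdivision. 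By insisting that $a$ and $b$ lie in the same Radon class (possible since $|Z|\geq 3$ for a configuration of distinct points), you guarantee a genuine bipyramid, which is exactly what is needed if ``$\A$-ridge'' is read, as in the paper's own definitions, as a pair of adjacent maximal simplices occurring in some $\Gamma(h)$ --- so your version actually repairs a point the paper's terse proof glosses over. The only step you leave as ``bookkeeping'' is realizability of the constructed bipyramid in a regular triangulation; this is easily supplied (lift $F\cup\{a,b\}$ so that both $s$ and $t$ are upper facets of the lifted bipyramid --- the opposite-sides condition makes this possible --- and push all remaining points of $\A$ sufficiently far down so that the folding constraints $\Psi_{s,j},\Psi_{t,j}>0$ hold), and the paper's proof does not address it either.
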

\begin{proof}
Let $Z=\{z_1,\ldots, z_r\}\subset \A$. Then $Z\setminus\{z_1\}\subset t$ for some $t\in\mathcal{B}$. Now, choose $z_t\in Z\setminus \{z_1\}$ and set $s:=t\setminus\{z_t\}\cup\{z_1\}$. Assume $s\notin \mathcal \mathcal{I}$. Then there is some circuit $Z^{*}\subset s\cup t$ with $z_t\notin Z^{*}$ contradicting Lemma \ref{lmm:fundamental_circuit}.
\end{proof}

In fact, the fundamental circuit $Z^r$ of the $\A$-ridge $r$ in Proposition \ref{lmm:circuitsupport} is subdivided into a positive and a negative part, $Z^r=(Z^r_+,Z^r_-)$, according to the sign of the coefficients $d_v$. This turns $\M(\A)$ into an \emph{oriented matroid}, cf. \cite{oriented+matroids}. Moreover, the regular triangulations of $\A$ are encoded in the \emph{flip graph}, which is the $1$-skeleton of the \emph{secondary polytope} $\Sigma\text{-poly}(\A)$ of $\A$, cf. \cite[Section 5.3]{SantosTriangulations}. An edge of the flip graph connecting vertices of $\Sigma\text{-poly}(\A)$, i.e. triangulations whose secondary cones $\subdiv(h)$ and $\subdiv(g)$ share a common facet, corresponds to an oriented fundamental circuit $Z^r=(Z^r_+,Z^r_-)$ for some ridge $r=(s,t)\in\Gamma(h)$ and represents a split of the bipyramid $(s,t)$ of $\A$ in two different ways.

\subsection{Reproducing spanning trees of $\Gamma(h)$}
In order to reproduce an edge-ordered spanning tree $T$ of $\Gamma(h)$ as output $\fil{g}=T$ of the greedy algorithm on the weighted dual graph $\Gamma(h)$, one needs to impose conditions on the height function $g$ such that the algorithm is forced to choose the right edges in the correct order.

\begin{lemma}\label{subsec:trivialObservation}
Let $g$ and $h$ be distinct linear forms on $\R^m$. Then \[F^{(g,h)}:=\{x\in\R^m\colon |g(x)|\leq |h(x)|\}\]
is a polyhedral fan consisting of four maximal cones.
\end{lemma}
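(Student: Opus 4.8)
The plan is to set up coordinates so that the absolute value signs can be removed on each orthant-like region determined by the signs of the two linear forms $g$ and $h$. First I would observe that since $g,h\colon\R^m\to\R$ are distinct linear forms, the set $F^{(g,h)}$ is invariant under positive scaling, so it is automatically a cone; the content of the lemma is the claim that it decomposes into exactly four maximal polyhedral cones and that the pieces fit together into a fan.

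The key step is to rewrite $|g(x)|\le|h(x)|$ as the disjunction of the two polynomial inequalities $(g(x)-h(x))(g(x)+h(x))\le 0$, i.e.
\[
F^{(g,h)}=\{x: g(x)\le h(x)\}\cap\{x:-g(x)\le h(x)\}\;\cup\;\{x:h(x)\le g(x)\}\cap\{x:-h(x)\le g(x)\}.
\]
Writing $p:=g-h$ and $q:=g+h$, the condition $|g|\le|h|$ becomes $pq\le 0$, which is the union of the two closed half-spaces-intersections $\{p\le 0,\ q\ge 0\}$ and $\{p\ge 0,\ q\le 0\}$. Each of these is an intersection of two closed half-spaces through the origin, hence a polyhedral cone; together with the two complementary cones $\{p\le 0,q\le 0\}$ and $\{p\ge 0,q\ge 0\}$ they tile $\R^m$, and all four together with their faces form a complete polyhedral fan (the common refinement of the two hyperplane arrangements $\{p=0\}$ and $\{q=0\}$). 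Since $g\ne h$ we have $p\ne 0$, so $\{p=0\}$ is a genuine hyperplane; if moreover $g\ne -h$ then $q\ne 0$ as well and the two hyperplanes $\{p=0\},\{q=0\}$ are distinct (they would coincide only if $p$ and $q$ were proportional, i.e. $g$ a scalar multiple of $h$, which one checks forces $g=\pm h$), so there are genuinely four full-dimensional cones; $F^{(g,h)}$ is the union of two opposite ones.

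I expect the only real subtlety — and the place to be careful rather than the place of genuine difficulty — to be the degenerate edge cases: if $g=-h$ then $q\equiv 0$ and $F^{(g,h)}$ collapses to the half-space $\{p\le 0\}$ together with $\{p\ge 0\}$, which is all of $\R^m$, so "four maximal cones" should be read with the convention that the hyperplane $\{g=0\}$ still splits space into the two half-spaces as the two (non-strict) maximal cones, or one restricts to $g\ne\pm h$; the statement as used later only needs $F^{(g,h)}$ to be a fan, which holds in every case. The final step is bookkeeping: verify that the faces of the four cones (the two rays/subspaces $\{p=0\}$, $\{q=0\}$ and their intersection) satisfy the intersection axiom for a fan, which is immediate since they are exactly the cones of the common refinement of two linear hyperplane arrangements.
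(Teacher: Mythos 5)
Your factorization $|g(x)|\le|h(x)|\iff (g-h)(x)\,(g+h)(x)\le 0$ is correct, but the fan structure it produces is not the one the lemma asserts, and the discrepancy matters later in the paper. With $p=g-h$ and $q=g+h$ you exhibit $F^{(g,h)}$ as the union of the \emph{two} cones $\{p\le 0,\ q\ge 0\}$ and $\{p\ge 0,\ q\le 0\}$; the ``four maximal cones'' in your write-up are the four cones of the complete fan on $\R^m$ cut out by the hyperplanes $\{p=0\}$ and $\{q=0\}$, of which only two lie in $F^{(g,h)}$. The lemma, however, claims that $F^{(g,h)}$ \emph{itself} carries a fan structure with four maximal cones, and the paper obtains these by sorting according to the signs of $g$ and $h$, namely $F^{(g,h)}_{\sigma_g\sigma_h}:=\{\sigma_g g\ge 0\}\cap\{\sigma_h h\ge 0\}\cap\{\sigma_h h-\sigma_g g\ge 0\}$ for $\sigma_g,\sigma_h\in\{+,-\}$; concretely, each of your two cones (e.g.\ $\{-h\le g\le h\}$) is further sliced by the hyperplane $\{g=0\}$ into two of the paper's cones. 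This sign-indexed refinement is not cosmetic: Proposition \ref{lmm:uniquecone} and the definition of $F^{r_1,r_2}(R)$ single out precisely the cone $F^{(g,h)}_{\sigma_{r_1}\sigma_{r_2}}$ by means of the sign constancy on $\mathrm{int}(\seco(R))$ from Proposition \ref{th:sign}, so your coarser two-cone decomposition would force a reformulation of the subsequent arguments. The repair is immediate — intersect each of your two cones with $\{g\ge 0\}$ and with $\{g\le 0\}$ — but as written the proposal establishes a fan with a different number of maximal cones than the one claimed and used.

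A smaller inaccuracy: you assert that $\{p=0\}$ and $\{q=0\}$ coincide only if $g=\pm h$. In fact $p$ and $q$ are proportional whenever $g$ and $h$ are linearly dependent (for instance $g=3h$ gives $p=2h$, $q=4h$), and in such cases $F^{(g,h)}$ can degenerate further than you describe (for $g=3h$ it is the hyperplane $\{h=0\}$), so neither decomposition has four full-dimensional members there. The paper's own one-line proof is equally silent about these degeneracies, and in the intended application the forms are edge-length functionals of distinct ridges evaluated on a full-dimensional secondary cone, so this is not a defect relative to the paper; but your parenthetical claim as stated is false and should be dropped or corrected.
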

\begin{proof} %The set $\{x\in\R^n\colon g(x)\leq h(x)\}$ is a closed half space of $(h-g)$.
 The choices for $\sigma_g,\sigma_h\in\{+,-\}$ yield a subdivision of $F^{(g,h)}$ into the four cones

\[F^{(g,h)}_{\sigma_g\sigma_h}\ :=\  \{\sigma_g g\geq 0\}\cap \{\sigma_h h\geq 0\} \cap \{(\sigma_h h-\sigma_g g)\geq 0\}\enspace .\]
\end{proof}

\begin{definition}
An \emph{$\A$-ridge graph} $R$ is a finite set of $\A$-ridges, such that there exists a regular subdivision $\subdiv(h)$ of $\A$ which shows $R$ among its adjacencies,  i.e.  $R\subset\Gamma(h)$ holds.
An \emph{$\A$-ridge forest} is a cycle-free $\A$-ridge graph and an $\A$-ridge tree is a connected $\A$-forest.  \end{definition}

\begin{lemma}\label{lmm:seccone_ridgegraph} For an $\A$-ridge graph $R$, the set of height functions $h\in\R^\A$ such that $R$ occurs in $\Gamma(h)$ is given by the interior of a full-dimensional cone.
\end{lemma}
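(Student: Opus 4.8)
The plan is to realise the set $\mathcal K(R):=\{h\in\R^\A\colon R\subset\Gamma(h)\}$ as a finite intersection of the open folding-constraint cones attached to the maximal simplices occurring in $R$, and then to observe that such an intersection is automatically the interior of a full-dimensional polyhedral cone.

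First I would rephrase $R\subset\Gamma(h)$ purely in terms of cells. Write $S(R):=\bigcup_{(s,t)\in R}\{s,t\}$ for the set of maximal simplices occurring in $R$; I claim that $R\subset\Gamma(h)$ holds if and only if every $u\in S(R)$ is a maximal cell of $\subdiv(h)$. The implication ``$\Rightarrow$'' is immediate, since every edge of the dual graph joins two maximal cells. For ``$\Leftarrow$'', fix an $\A$-ridge $r=(s,t)\in R$ with $s,t\in\subdiv(h)$; their common face is the ridge $f:=s\cap t$, the convex hull of the vertices shared by $s$ and $t$, and $s,t$ lie on opposite sides of $\mathrm{aff}(f)$ because distinct cells of a subdivision meeting in $f$ cannot overlap. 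Hence $\mathrm{aff}(f)$ cannot be the affine span of a facet of $\conv(\A)$, and therefore no point of the relative interior of the $(n-1)$-dimensional set $f$ lies on $\partial\conv(\A)$ at all; so $f$ is an interior ridge of $\subdiv(h)$ and $r\in\Gamma(h)$.

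Next I would describe, for a single maximal simplex $u=\conv(v_1,\ldots,v_{n+1})$, the locus of $h$ with $u\in\subdiv(h)$. By the folding constraints recalled in Section~\ref{sec:RegSub}, $u^h$ is a genuine facet of the upper hull of $\A^h$ exactly when every remaining lifted point $v_j^h$, $v_j\in\A\setminus u$, lies \emph{strictly} below the hyperplane through $u^h$; equivalently $\Psi_{u,j}(h)>0$ for all such $j$. Thus $\{h\colon u\in\subdiv(h)\}=\mathrm{int}(H_u)$, an open polyhedral cone. Combining this with the previous paragraph, and using that a finite intersection of interiors is the interior of the intersection, we obtain
\[\mathcal K(R)\;=\;\bigcap_{u\in S(R)}\mathrm{int}(H_u)\;=\;\mathrm{int}\!\Big(\bigcap_{u\in S(R)}H_u\Big).\]

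Finally, $\bigcap_{u\in S(R)}H_u$ is a closed polyhedral cone, being a finite intersection of such, and it is full-dimensional since its interior $\mathcal K(R)$ is nonempty: by hypothesis $R$ is an $\A$-ridge graph, so some $h_0$ satisfies $R\subset\Gamma(h_0)$. This yields the claim. I expect the genuinely non-routine point to be the ``$\Leftarrow$'' step above, namely certifying that once every simplex of $R$ occurs the shared ridges are automatically interior ridges and hence edges of $\Gamma(h)$; the passage through the folding constraints is then routine bookkeeping on top of Section~\ref{sec:RegSub}.
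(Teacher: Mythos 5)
Your proposal is correct and follows the paper's own route: both identify the locus $\{h\colon R\subset\Gamma(h)\}$ with the interior of $\bigcap_{s\in V(R)}H_s$, the intersection of the folding-constraint cones of the simplices occurring in $R$, with full-dimensionality coming from the existence of a realizing height function in the definition of an $\A$-ridge graph. You merely spell out the details the paper leaves to Section~\ref{sec:RegSub} (that shared ridges of two maximal cells are automatically interior, hence dual edges, and that strict folding constraints cut out exactly the simplices of $\subdiv(h)$), which is fine.
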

\begin{proof} Following  \ref{sec:RegSub}, we see $R\subset\Gamma(h)$ precisely if $h$ lies in the interior of $\bigcap_{s\in V(R)} H_s$.
\end{proof}

The parameter cone of Lemma \ref{lmm:seccone_ridgegraph} is the union of those secondary cones whose triangulations realize the adjacencies of $R$  and we define it itself as the \emph{secondary cone} $\seco(R):=\bigcap_{s\in V(R)} H_s$ of $R$. For instance, if $R$ is a spanning tree of $\Gamma(h)$, then $\seco(R)=\seco(h)$ holds since all maximal cells of $\subdiv(h)$ are required to appear. Yet, the graph $R$ can be chosen significantly smaller in order to fulfill $\seco(R)=\seco(h)$, for instance in Figure \ref{fig:khan3dim} the ridge graphs $\{2,3\},\{1,5\},\{1,3\}$ and $\{2,4,5\}$ are inclusion minimal  with this property.
%%1,3,5
%1,2
%0,5
%0,2 Label<5 -> increase by 1
%\begin{align*}
%&\{\{F,E\},\{B,C\}\},\\
%& \{\{A,F\},\{C,D\}\}, \\
%&\{\{A,F\},\{B,C\}\} \text{ and }\\
%&\{\{F,E\},\{s,t\},\{C,D\}\}\enspace . \\ 
%\end{align*}

\begin{proposition}\label{th:sign} Given an $\A$-ridge $r\in\Gamma(h)$, the map 
$\text{sign}(\ell_\bullet(r))$ is constant and non-zero on the interior $\text{int}(\seco(R))$ of $\seco(R)$ for any $\A$-ridge graph $R$ containing $r$.
\end{proposition}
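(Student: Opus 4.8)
The plan is to identify $\ell_\bullet(r)$, up to a fixed nonzero scalar depending only on the simplex $s$, with one of the folding constraints defining $H_s$, and then to exploit the inclusion $\seco(R)\subseteq H_s$. As a preliminary I would record that $\ell_\bullet(r)$ is not the zero functional on $\R^\A$: by Lemma \ref{lmm:fundamental_circuit} the ridge $r=(s,t)$ carries a unique, hence nonempty, circuit $Z^r$, and by Proposition \ref{lmm:circuitsupport} this circuit is precisely the support $\{v\in\A\colon d_v\neq 0\}$ of $\ell_\bullet(r)$. Thus $\ell_\bullet(r)\neq 0$, so $\{h\colon\ell_h(r)=0\}$ is a genuine hyperplane in $\R^\A$; this is what will force the ``non-zero'' half of the statement.

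Next I would line up the notation. Write $r=(s,t)$ with $s=\conv(v_1,\dots,v_{n+1})$ and $t=\conv(v_2,\dots,v_{n+2})$ as in (\ref{edgelength}); then $v_{n+2}\in\A\setminus s$, and the $(n+2)\times(n+2)$ matrix occurring in the folding constraint $\Psi_{s,n+2}$ of (\ref{eq:FoldingConstraints}) is exactly the matrix $\tilde{L}_\bullet(r)$ of (\ref{edgelength}). Hence
\[\Psi_{s,n+2}(h)=\text{sign}\big(\det\tilde{L}_h(s)\big)\cdot\ell_h(r),\]
and $\det\tilde{L}_h(s)$ does not actually depend on $h$ — it is built from the column of ones and the coordinates of $v_1,\dots,v_{n+1}$ only — and it is nonzero because $s$ is a full-dimensional simplex. (Symmetrically one could work with $\Psi_{t,1}$; either choice does.)

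Finally, since $r\in R$ we have $s\in V(R)$, so $\seco(R)=\bigcap_{\sigma\in V(R)}H_\sigma\subseteq H_s\subseteq\{\Psi_{s,n+2}\geq 0\}$. By Lemma \ref{lmm:seccone_ridgegraph} the interior $\text{int}(\seco(R))$ is a nonempty open set contained in the closed half-space $\{\Psi_{s,n+2}\geq 0\}$; since $\Psi_{s,n+2}$ is not identically zero (by the preliminary step), this open set is in fact contained in the open half-space $\{\Psi_{s,n+2}>0\}$. Plugging into the displayed identity yields $\text{sign}(\ell_h(r))=\text{sign}(\det\tilde{L}_h(s))$ for every $h\in\text{int}(\seco(R))$, a fixed nonzero value — which is, incidentally, the same for every $\A$-ridge graph $R$ containing $r$, so the sign is really a function of $r$ alone. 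I do not expect a genuine obstacle: the only points needing care are the index/orientation bookkeeping that makes the matrix in $\Psi_{s,n+2}$ literally equal $\tilde{L}_\bullet(r)$, and the elementary observation that an open subset of a closed half-space whose defining form is not identically zero lies entirely in the open half-space — which is precisely where the non-vanishing of $\ell_\bullet(r)$ via Lemma \ref{lmm:fundamental_circuit} and Proposition \ref{lmm:circuitsupport} is used.
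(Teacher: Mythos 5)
Your proof is correct, but it runs along a different track than the paper's. The paper argues by contradiction via convexity: if $\text{sign}(\ell_h(r))$ and $\text{sign}(\ell_{h'}(r))$ differed for $h,h'$ in the interior, the intermediate value theorem would produce some $g\in\conv\{h,h'\}\subset\seco(R)$ with $\ell_g(r)=0$, which is incompatible with Lemma \ref{lmm:seccone_ridgegraph} (such a $g$ would lift the bipyramid $(s,t)$ flat, so $r$ could not occur as an adjacency in $\Gamma(g)$); the non-vanishing is absorbed into the same contradiction. You instead identify $\ell_\bullet(r)$, up to the constant nonzero factor $\text{sign}(\det\tilde{L}(s))$, with the folding constraint $\Psi_{s,n+2}$ from (\ref{eq:FoldingConstraints}), note $\seco(R)\subseteq H_s\subseteq\{\Psi_{s,n+2}\geq 0\}$, and use the elementary fact that an open subset of a closed half-space with non-trivial defining form lies in the open half-space; the non-triviality of $\ell_\bullet(r)$ you get from Lemma \ref{lmm:fundamental_circuit} and Proposition \ref{lmm:circuitsupport} (one could also just read off that the coefficient of $h(v_{n+2})$ is $\pm\det\tilde{L}(s)\neq 0$). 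Both arguments rest on the same structural input, namely the inequality description of $\seco(R)$ from Section \ref{sec:RegSub}, but your version is more explicit on exactly the point the paper's one-line proof leaves implicit (why $\ell_g(r)=0$ in the interior is a contradiction), and it yields the extra information that the constant sign equals $\text{sign}(\det\tilde{L}(s))$, hence depends on $r$ alone and not on the ambient ridge graph $R$ --- which is tacitly used right after the proposition when the paper writes $\sigma_r$ without reference to $R$.
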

\begin{proof} If there are $h,h'\in\R^\A$ with $\text{sign}(\ell_h(r))<0<\text{sign}(\ell_{h'}(r))$, then there is some $g\in \conv\{h,h'\}\subset \seco(R)$ with $\ell_g(r)=0$ contradicting Lemma \ref{lmm:seccone_ridgegraph}.
\end{proof}

For an $\A$-ridge $r\in R$, set $\sigma_r\in \{-1,1\}$ to be $\text{sign}(\ell_\bullet(r))$ on $ \text{int}(\seco(R))$.

\begin{proposition}\label{lmm:uniquecone}
For $\A$-ridges $r_1,r_2\in R$ of an $\A$-ridge graph $R$, we have
\[\#\left \{c\in F^{(l(r_1),l(r_2))}\colon \dim(c\cap \seco(R))=\#\A\right\}\leq1\enspace .\]
If this set has a unique full-dimensional cone, we denote it by $F^{r_1,r_2}(R)$.
\end{proposition}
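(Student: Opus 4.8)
The plan is to read off, via Proposition~\ref{th:sign}, the signs of the two forms $\ell_\bullet(r_1)$ and $\ell_\bullet(r_2)$ on the interior of $\seco(R)$, and then to observe that this pair of signs pins down exactly one of the four maximal cones of $F^{(\ell_\bullet(r_1),\ell_\bullet(r_2))}$ as the only one that can possibly meet $\seco(R)$ in full dimension $\#\A$.

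First I would fix notation. By Lemma~\ref{lmm:seccone_ridgegraph} the cone $\seco(R)$ is full-dimensional, so $\text{int}(\seco(R))\neq\emptyset$, and by Proposition~\ref{th:sign} (with the convention introduced immediately afterwards) there are constants $\sigma_{r_1},\sigma_{r_2}\in\{-1,1\}$ such that $\text{sign}(\ell_\bullet(r_1))\equiv\sigma_{r_1}$ and $\text{sign}(\ell_\bullet(r_2))\equiv\sigma_{r_2}$ on $\text{int}(\seco(R))$. In particular neither form is identically zero, so by Lemma~\ref{subsec:trivialObservation} the set $F^{(\ell_\bullet(r_1),\ell_\bullet(r_2))}$ is the union of the four cones $F_{\sigma_1\sigma_2}=\{\sigma_1\ell_\bullet(r_1)\geq 0\}\cap\{\sigma_2\ell_\bullet(r_2)\geq 0\}\cap\{\sigma_2\ell_\bullet(r_2)-\sigma_1\ell_\bullet(r_1)\geq 0\}$ for $\sigma_1,\sigma_2\in\{-1,1\}$.

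The core step is then as follows. Suppose some maximal cone $c=F_{\sigma_1\sigma_2}$ satisfies $\dim(c\cap\seco(R))=\#\A$. Since $c$ and $\seco(R)$ are polyhedral cones, $c\cap\seco(R)$ is a full-dimensional polyhedral cone, hence has non-empty interior; choose a point $p\in\text{int}(c\cap\seco(R))$. By monotonicity of the interior operator, applied to $c\cap\seco(R)\subseteq c$ and to $c\cap\seco(R)\subseteq\seco(R)$, we have $p\in\text{int}(c)\cap\text{int}(\seco(R))$. From $p\in\text{int}(\seco(R))$ we obtain $\text{sign}(\ell_\bullet(r_i)(p))=\sigma_{r_i}$ for $i=1,2$. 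On the other hand $c\subseteq\{\sigma_1\ell_\bullet(r_1)\geq 0\}$, and since $\ell_\bullet(r_1)$ is a non-zero linear form this set is a closed half-space with interior $\{\sigma_1\ell_\bullet(r_1)>0\}$, so $\text{int}(c)\subseteq\{\sigma_1\ell_\bullet(r_1)>0\}$ and therefore $\text{sign}(\ell_\bullet(r_1)(p))=\sigma_1$; the same argument gives $\text{sign}(\ell_\bullet(r_2)(p))=\sigma_2$. Comparing the two descriptions of the signs forces $\sigma_1=\sigma_{r_1}$ and $\sigma_2=\sigma_{r_2}$, i.e. $c=F_{\sigma_{r_1}\sigma_{r_2}}$. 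Because $\sigma_{r_1},\sigma_{r_2}$ depend only on $R$, $r_1$ and $r_2$, there is at most one such cone, which is the asserted bound; when it exists it is $F^{r_1,r_2}(R)=F_{\sigma_{r_1}\sigma_{r_2}}$.

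The main thing to watch — though it is a soft point — is the implication \textquotedblleft$p\in\text{int}(c)$ implies $\sigma_i\ell_\bullet(r_i)(p)>0$\textquotedblright: the inequality $\sigma_i\ell_\bullet(r_i)\geq 0$ need not be an irredundant facet of $c$, but it still defines a closed half-space with a genuine bounding hyperplane, and no interior point of $c$ can lie on that hyperplane, which is exactly what the argument uses. One should also dispatch the degenerate case signalled in Remark~\ref{samecircuit}: if $\ell_\bullet(r_1)$ and $\ell_\bullet(r_2)$ happen to be proportional, then $F^{(\ell_\bullet(r_1),\ell_\bullet(r_2))}$ is either all of $\R^\A$ or a proper linear subspace of it, so the number of its full-dimensional maximal cones meeting $\seco(R)$ is $1$ or $0$ and the inequality again holds trivially.
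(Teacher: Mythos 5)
Your argument is correct and follows essentially the same route as the paper: Proposition~\ref{th:sign} fixes the signs $\sigma_{r_1},\sigma_{r_2}$ on $\text{int}(\seco(R))$, so the only maximal cone of $F^{(\ell_\bullet(r_1),\ell_\bullet(r_2))}$ that can meet $\seco(R)$ in full dimension is $F_{\sigma_{r_1}\sigma_{r_2}}$; you merely spell out the interior-point comparison that the paper leaves implicit, and your treatment of the proportional-forms case is a harmless extra precaution.
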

\begin{proof}
By Proposition \ref{th:sign}, the only candidate for $c$ is $F^{(l(r_1),l(r_2))}_{\sigma_{r_1}\sigma_{r_2}}$. It  appears in full dimension precisely if $\{\sigma_{r_2} l(r_2)-\sigma_{r_1} l(r_1)\geq 0\}\cap \seco(R)$ is full-dimensional. 
\end{proof}

\begin{remark}\label{rem:uniquecone} 
The proof of Proposition \ref{lmm:uniquecone} shows that the hyperplane $\sigma_{r_2} l(r_2)=\sigma_{r_1} l(r_1)$, provided that it  touches the interior of $\seco(R)$, subdivides $\seco(R)$ into the two subcones $\ell_\bullet (r_1)\leq \ell_\bullet (r_2)$ and $\ell_\bullet (r_1)\geq \ell_\bullet (r_2)$.
\end{remark}

With regard to Proposition \ref{lmm:uniquecone}, we define an \emph{ordered $\A$-ridge graph} $R_<$ to be an $\A$-ridge graph $R$ equipped with an enumeration of its edges by an ascending sequence of subsequent integers.  Further, we call an ordered spanning tree $T_<$ of the dual graph $\Gamma(h)$ \emph{realizable} if there exists a height function $g\in\seco(h)$, which satisfies $T_<=\fil{g}$. Note that for a fixed spanning tree, one  order may be realizable while the other is not  as we will see in Figure \ref{fig:Khan3d_coveringbyKruskalcones}. A ridge $r=(s,t)$ in the dual graph  $\Gamma(h)$ is called \emph{stable} if no other ridge of $\Gamma(h)$  
generically, i.e. for a generically chosen $h\in\R^\A$, has the same tropical edge length as $r$. Otherwise, the ridge $r$ is called \emph{instable at some circuit $Z$}, or \emph{$Z$-instable}, and its fundamental circuit $Z$ needs to occur within some other ridge in this case, cf. Lemma \ref{lmm:fundamental_circuit} and Remark \ref{samecircuit}. For instance, if $Z$-instable ridges do not receive subsequent integers in the edge order, the tree is not realizable. Hence any set of $Z$-instable ridges has an interval of integers as \emph{instability range}.

\begin{definition} For a generic height function $h\in\R^\A$ and a realizable ordered spanning tree $T_<$ of $\Gamma(h)$, let the \emph{MST-cone} $\mathcal{K}(T_<)$ of $T_<$ be given by 
\[\mathcal{K}(T_<):=\{g\in \seco(h): g \text{ is generic and } \fil{g}=_\lozenge T_<\}\subset\R^\A\]
as the set of all height functions $g$ on $\A$ such that the 
greedy algorithm on $\Gamma(g)$ possibly  chooses $T_<$ as minimum spanning tree. This is reflected in the notation $\fil{g}=_\lozenge T_<$ as adaption from modal logic. 
\end{definition}

The $\mathcal{K}$ in the notation of the MST-cone goes back to Kruskal, who described in \cite{Kruskal} the greedy algorithm in the graph context, today known as Kruskal's algorithm, with regard to the travelling salesperson problem (TSP). 
In fact, minimum spanning trees have been used ever since in heuristics and approximations for the TSP, see e.g. \cite{KarpI,KarpII}. In the current  paper, we only consider edge weights which are dictated by geometric relations in tropical hypersurfaces. As such, the framework as described in Definition \ref{def:edgelinearform} is linear.

In the sequel we will show that $\mathcal{K}(T_<)$ is a full-dimensional polyhedral subcone of the secondary cone $\seco(h)$ of $\subdiv(h)$. By Proposition \ref{lmm:uniquecone},   every edge of $T_<$ contributes at most one linear hyperplane  to $\mathcal{K}(T_<)$ and the MST-cones give rise to a fan structure.

\section{An algorithm for computing $\mathcal{K}(T_<)$}\label{sec:algo}

Let us fix a realizable ordered spanning tree $T_<:=(r_1,\ldots , r_{m})\subset\Gamma(h)$ for some generic height function $h\in \R^\A$. For $v=0,\ldots, m$ we define  $T_v:=(r_1,\cdots,r_v)$ to be the tree with the same node set as $T$ but only with edges $r_1,\ldots, r_v$ inserted. Further, we set
\[c(T_v):=\{e\in E(\Gamma(h))\setminus E(T_v)\colon T_v\cup e \text{~contains a circuit}\}\]
and we define the \emph{$r_v$-relevant edges} $H(r_v)$ to be 
\[H(r_{v}):=E(\Gamma(h))-E(T_{v})-c(T_v)\enspace .\] 
Thus, the edges in $H(r_v)$ are precisely the edges in $\Gamma(h)$ which do not close a circuit in $T_v$. We say that an edge $r\in\Gamma(h)$ is \emph{relevant} for $r_v$ if $r$ lies in $H(r_v)$. Further, for every  edge $r\in E(\Gamma(h))\setminus T$ there is a unique minimal \emph{relevance index} $i(r)<m$  such that  $r$ is relevant for $T_{i(r)-1}$ but irrelevant for $T_{i(r)}$. In this case we call $\underline{r}=r_{i(r)}$ the \emph{cut edge} of $r$. 

\begin{example*}[Example of Figure \ref{fig:khan3dim} continued.]
In Figure \ref{fig:khan3dim}, edge $4$ is relevant for the edges $0,1$ and $2$. Once edge $3$ is added, edge $4$ closes a circuit in the tree $\{0,1,2,3\}$. Thus $i(4)=4$ and edge $3$ is the cut edge for edge $4$.  
\end{example*}

In order to cut the MST-cone $\mathcal{K}(T_<)$ out of $\seco(h)$ along Proposition \ref{lmm:uniquecone}, we need to consider two kinds of conditions. First, the $(m-1)$ \emph{in-tree conditions} are given by \[\ell_{\bullet}(r_1)\leq \ell_{\bullet}(r_2)\leq \ell_{\bullet}(r_3)\leq\ \cdots\  \leq \ell_{\bullet}(r_{m-1})\leq \ell_{\bullet}(r_{m})\enspace .\] 
Second, for any $r\in E(\Gamma(h))\setminus T$, we make sure that the greedy algorithm  never choses $r$ by requiring the \emph{cut-edge condition} $\ell_\bullet(\underline{r})\leq \ell_\bullet(r)$. 
As a consequence,  the number of facets of $\mathcal{K}(T)$ is always lower than ${\# E(\Gamma)}$ and in practice, the actual number of facets is  almost always a lot lower since for instance by Remark \ref{samecircuit} some of the in-tree conditions might be redundant due to multiple occurrences of instability ranges.\hfill
%\vspace{0.2cm}\\
%

%\subsection{Kruskal fan for $\mathbf{(T,\subdiv,L)}$} For a given ridge tree $T\subset \Gamma$, pass to an (edge) labeled tree $T=T_L$ via a some bijection $L\colon E(T)\to [\#E(T)]$ and order the edges of $T$ ascendingly. 
%For $k=0,\ldots |E(\Gamma)|$, define  $T_k$ to be the tree with the same node set as $T$ and with edges $r_1,\ldots, r_k$ inserted. 
%The \emph{relevant/(half space)} edges $H(r_{r+1})$ for $r_{k+1}$ are now given by 
%\[H(r_{k+1}):=E(\Gamma)-E(T_{k})-c(T_k)\enspace .\] 
%
%Furthermore Let $F^{(g,h)}_\subdiv$ be the fan consisting of the cones $F^{(g,h)}_{++}, F^{(g,h)}_{+-}, F^{(g,h)}_{-+}$ and $F^{(g,h)}_{--}$ which intersect the secondary cone $\subdiv$ in a full-dimensional subcone.

%
%Furthermore Let $F^{(g,h)}_\subdiv$ be the fan consisting of the cones $F^{(g,h)}_{++}, F^{(g,h)}_{+-}, F^{(g,h)}_{-+}$ and $F^{(g,h)}_{--}$ which intersect the secondary cone $\subdiv$ in a full-dimensional subcone.

\begin{algorithm}[H]
        \SetKwIF{If}{ElseIf}{Else}{if}{then}{elif}{else}{}%
        \DontPrintSemicolon
        \SetKwProg{MSTcone}{MST-cone}{}{}
        \LinesNotNumbered
        \KwIn{
        \begin{enumerate}
         \item[(1)] a generic height function $h\in\R^\A$
         \item[(2)] the secondary cone $\seco(h)$ of $h$
         \item[(3)] a realizable ordered spanning tree $T_<=(r_1,\ldots, r_m)\subset\Gamma(h)$
         
        \end{enumerate} 
        }
        \KwOut{the MST-cone $\mathcal{K}(T_<)=\{g\in\R^\A\colon T_{\text{min}}(g)=_\lozenge T_<\}$}
        \MSTcone(){}{
        		%\nl Compute the secondary cone $\seco(h)$.\\      		
        		\nl $C\longleftarrow \R^{|\A|}$\\
                \nl  \For{$k=1,\ldots,m-1$}{
                        \nl $C\longleftarrow C\cap F^{r_k,r_{k+1}}(T)$\tcp*{Add $(m-1)$ in-tree conditions} % cardinality of F^{r_k,r}(T) is one and not 0!
                        
                }
                \nl  \For{$r\in E(\Gamma(h))\setminus E(T)$}{
                        \nl $C\longleftarrow C\cap F^{\underline{r},r}(T)$\tcp*{Add cut-edge conditions} % cardinality of F^{r_k,r}(T) is one and not 0!
                        
                }

        \nl             \Return{$C\cap \seco(h)$}\label{algo_line:return}
        }
        \caption{Compute the MST-cone $\mathcal{K}(T_<)$ \label{algo:kruskalcone}}
\end{algorithm}

\begin{proposition} Algorithm \ref{algo:kruskalcone} outputs $\mathcal{K}(T_<)$, which is indeed a full-dimensional cone.
\end{proposition}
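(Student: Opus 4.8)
The plan is to prove two things: that the cone returned by Algorithm~\ref{algo:kruskalcone} is $\mathcal K(T_<)$ --- read as the closed cone $\overline{\mathcal K(T_<)}$, whose generic points are exactly those $g$ with $\fil g=_\lozenge T_<$ --- and that this cone is full-dimensional in $\R^\A$. First I would reduce the statement to a system of linear inequalities on $\seco(h)$. Since $T$ is a spanning tree of $\Gamma(h)$ one has $\seco(T)=\seco(h)$, so Proposition~\ref{th:sign} applies to every ridge $r$ of $\Gamma(h)$ and gives a fixed sign $\sigma_r$ with $L_g(r)=\sigma_r\ell_g(r)=|\ell_g(r)|$ for all $g\in\seco(h)$; on the secondary cone every edge weight is thus the honest linear form $\sigma_r\ell_\bullet(r)$. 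Together with the proof of Proposition~\ref{lmm:uniquecone} and Remark~\ref{rem:uniquecone}, this shows $F^{r_i,r_j}(T)\cap\seco(h)=\{g\in\seco(h)\colon L_g(r_i)\le L_g(r_j)\}$, so the algorithm's output is
\[
C\cap\seco(h)\;=\;\bigl\{\,g\in\seco(h)\ \colon\ L_g(r_1)\le\cdots\le L_g(r_m)\ \text{ and }\ L_g(\underline r)\le L_g(r)\ \text{ for every }r\in E(\Gamma(h))\setminus E(T)\,\bigr\}.
\]
It then remains to show that a generic $g\in\text{int}(\seco(h))$ lies in this cone iff $\fil g=_\lozenge T_<$, and that the cone is full-dimensional.

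For the equivalence I would run Kruskal's algorithm on the weighted graph $\Gamma(g)$, processing edges in weakly increasing weight and breaking ties by taking tree edges before non-tree edges, and tree edges in $T_<$-order --- precisely the freedom recorded by ``$=_\lozenge$''. For ``$\Leftarrow$'', an induction on the number of processed edges shows the accepted set is always an initial segment $T_j=(r_1,\dots,r_j)$: the tree edge $r_{j+1}$ is accepted because $T_j\cup\{r_{j+1}\}$ is a sub-forest of $T$, and a non-tree edge $r$ is rejected because $L_g(\underline r)\le L_g(r)$ forces its cut edge $\underline r=r_{i(r)}$, a tree edge, to be accepted before $r$ is processed, so $T_{i(r)}$ is already present and, by the definition of the relevance index $i(r)$, the edge $r$ closes a circuit in it. For ``$\Rightarrow$'', if some tie-break makes Kruskal output $(r_1,\dots,r_m)$ then, since Kruskal never deletes an edge, the accepted set is again always an initial segment $T_j$; accepting the $r_k$ in index order while processing them in weakly increasing weight forces the in-tree inequalities, and at the moment a non-tree edge $r$ is rejected the accepted set is some $T_j$ with $r$ closing a circuit, hence $j\ge i(r)$ by minimality of $i(r)$, so $\underline r$ was processed before $r$ and $L_g(\underline r)\le L_g(r)$. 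This identifies the generic locus of $C\cap\seco(h)$ with $\mathcal K(T_<)$.

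For full-dimensionality, note that $C\cap\seco(h)$ is a finite intersection of linear half-spaces with the full-dimensional cone $\seco(h)$, hence a polyhedral cone; it is enough to exhibit an interior point. Realizability of $T_<$ supplies a generic $g_0\in\text{int}(\seco(h))$ with $\fil{g_0}=T_<$, and by the equivalence above $g_0$ satisfies all in-tree and cut-edge inequalities. Among these, the ones comparing ridges with different fundamental circuits are strict at $g_0$, since an equality of the two tropical edge lengths would, by Proposition~\ref{lmm:circuitsupport} and Remark~\ref{samecircuit}, be a non-generic coincidence; the others compare ridges sharing a fundamental circuit, for which the forms $\ell_\bullet$ coincide up to the common sign, so they reduce to $0\le 0$ and cut nothing. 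Hence $g_0$ lies in the interior of $C\cap\seco(h)$, which is therefore full-dimensional, and $\mathcal K(T_<)$ is dense in it, i.e.\ the algorithm returns $\overline{\mathcal K(T_<)}$.

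I expect the combinatorial equivalence of the second step to be the main obstacle: one must carefully match the tie-breaking freedom hidden in $=_\lozenge$ with a system of \emph{weak} inequalities, and track the relevance indices precisely enough to see that the cut-edge conditions --- and exactly these --- suffice to suppress every non-tree edge, so that the inequalities collected by the algorithm cut out neither more nor less than $\mathcal K(T_<)$.
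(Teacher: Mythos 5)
Your proposal is correct and follows essentially the same route as the paper: the paper's proof is a one-sentence assertion that the in-tree and cut-edge conditions are necessary and sufficient for $T_<$ to be a possible greedy output and that they cut a cone via Proposition \ref{lmm:uniquecone}, which is exactly what you establish. You simply supply the details the paper leaves implicit (the tie-breaking analysis of Kruskal's algorithm, the closure/genericity caveat, and the interior point from realizability giving full-dimensionality), all of which are consistent with the paper's setup.
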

\begin{proof}
The set of in-tree and cut-edge conditions is necessary and sufficient for $T$ to possibly appear as output of the greedy algorithm,  and defines a cone by Proposition \ref{lmm:uniquecone}.
\end{proof}

%\begin{algorithm}[H]
%        \SetKwIF{If}{ElseIf}{Else}{if}{then}{elif}{else}{}%
%        \DontPrintSemicolon
%        \SetKwProg{Kruskalcone}{Kruskal\_cone}{}{}
%        \LinesNotNumbered
%        \KwIn{generic $h\in\R^\A$, realizable ordered spanning tree~$T\subset\Gamma(h)$ 
%        }
%        \KwOut{Kruskal cone $\mathcal{K}(T)=\{g\in\R^\A\colon T_{\text{min}}(g)=T\}$}
%        \Kruskalcone(){}{
%        		\nl Compute the secondary cone $\seco(h)$.\\      		
%        		\nl $C\longleftarrow \R^{|\A|}$\\
%                \nl  \For{$k=1,\ldots,|E(T)|$}{
%                        \nl\label{chooseHyperplane_Alg2}\For{$r\in H(r_k)$}{
%                        \nl $C\longleftarrow C\cap F^{r_k,r}(T)$ % cardinality of F^{r_k,r}(T) is one and not 0!
%                        }
%                }
%        \nl             \Return{$C\cap \seco(h)$}
%        }
%        \caption{Kruskal Fan, returning a single cone \label{algo:kruskalcone}}
%\end{algorithm}
%
%\begin{proposition} Algorithm \ref{algo:kruskalcone} outputs the Kruskal cone $\mathcal{K}(T)$.
%\end{proposition}
 
Let $m_*$ be the number $\#E(\Gamma(h))$ of edges of $\Gamma(h)$ and let $v_*$ be the number $\#V(\Gamma(h))$ of its vertices. In the discussion from Section \ref{sec:RegSub} and in the proof of Lemma \ref{subsec:trivialObservation}, we see that the linear forms $l_\bullet(r_1),\ldots,l_\bullet(r_m)$ are byproducts of the secondary cone computation. Concerning the relevance indices needed in the second for-loop of Algorithm \ref{algo:kruskalcone},  every edge which is not in the spanning tree needs at most $m$ cycle checks on subgraphs of $\Gamma(h)$, which takes $\mathcal{O}(m_*+ v_*)$ each using depth-first search. Thus, the complexity of Algorithm \ref{algo:kruskalcone} adds up to $\mathcal{O}(m_*^2+m_*v_*)$. Concerning the quantities $m_*$ and $v_*$, for $\A=\{0,1\}^n$  the $n$-dimensional unit cube the estimate $2^n-n\leq v_*\leq n!$ follows from  \cite[Theorem 2.6.1]{SantosTriangulations}, where $2^n-n = v_*$ holds precisely if $\Gamma(h)$ is a tree, and we have the bound  $m_*\leq \frac{1}{2}(n+1)!-n(2^{n-1}-n +1)$. 
In a variant of Algorithm \ref{algo:kruskalcone} where any ordered input tree $T_<\subset\Gamma(h)$ is allowed, in order to verify its realizability one additionally needs to ensure that the cone $C\cap\seco(h)$ in line \ref{algo_line:return} is full-dimensional.

We remark that if $T_<$ is realizable and has only stable edges as in  Figure \ref{fig:khan3dim}, then the interior $\mathcal{K}(T_<)$ describes the locus where the greedy algorithm has only one  possibility for choosing a minimum spanning tree of $\Gamma(h)$, i.e. $\text{int}(\mathcal{K}(T_<))=\{g\in \seco(h)\colon \fil{g}=T_<\}$. If $T_<$ has instable edges, the greedy algorithm on $g\in\mathcal
K(T_<)$ may possibly choose minimum spanning trees whose edge sets differ from $E(T_<)$. However, the  tree $T_<$ is among the candidates for the optimum and the other outcomes can be controlled.

\begin{proposition} Let $T_< =_\lozenge \fil{h}$ and $T^{'}_<=_\lozenge \fil{h}$ be two valid outputs of the greedy algorithm for the same height function $h$. Then each stable edge of $T_<$ also appears in $T^{'}_<$.
\end{proposition}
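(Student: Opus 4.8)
The plan is to reduce the assertion to the standard behaviour of the greedy (Kruskal) algorithm on the cycle matroid, together with the definition of stability. Throughout, fix the generic height function $h$, set $G:=\Gamma(h)$, let $\M:=\M(G)$ be its cycle matroid and write $w:=\ell_h$ for the induced weight function on $E(G)$. Let $e$ be a stable edge of $T_<$. By the definition of stability, no ridge of $G$ other than $e$ generically has the same tropical edge length as $e$; since $h$ is generic this means $w(e)\neq w(f)$ for every $f\in E(G)\setminus\{e\}$. Hence in any nondecreasing ordering of $E(G)$ by $w$ the edge $e$ occupies one fixed slot: every $f$ with $w(f)<w(e)$ precedes $e$ and every $f$ with $w(f)>w(e)$ follows $e$, no matter how ties among the remaining edges are broken. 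Put $S:=\{f\in E(G):w(f)<w(e)\}$.

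Next I would inspect the state of a greedy run at the step in which $e$ is examined. By the preceding paragraph, exactly the edges of $S$ have been examined before $e$, and the subset $F\subseteq S$ of edges that were actually inserted is a maximal independent subset of $S$ in $\M$, i.e.\ a basis of the restriction $\M|_S$ — this is the usual loop invariant of Kruskal's algorithm (an edge of $S$ is discarded precisely when it lies in the closure of the forest built so far). Although $F$ may depend on the tie-breaking used within $S$, every basis of $\M|_S$ has one and the same closure $\operatorname{cl}_\M(S)$. The greedy algorithm inserts $e$ exactly when $F\cup\{e\}$ is independent in $\M$, equivalently when $e\notin\operatorname{cl}_\M(F)=\operatorname{cl}_\M(S)$; in graph language, when the two endpoints of $e$ lie in different connected components of the subgraph $(V(G),S)$. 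The decisive point is that this condition refers only to $e$ and to $w=\ell_h$, and not to any choice made during the run.

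Finally I would conclude. The tree $T_<=_\lozenge\fil{h}$ is produced by one execution of the greedy algorithm; since $e\in T_<$, the step above forces $e\notin\operatorname{cl}_\M(S)$. The tree $T'_<=_\lozenge\fil{h}$ is produced by another execution, but with the same $h$, hence the same $w$ and the same set $S$; so when that execution reaches $e$ the identical criterion $e\notin\operatorname{cl}_\M(S)$ is met and $e$ is inserted, giving $e\in T'_<$. Since $e$ was an arbitrary stable edge of $T_<$, this is the claim; symmetrically, the stable edges of $T_<$ and $T'_<$ coincide.

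I expect the only step needing genuine care to be the matroid bookkeeping in the middle paragraph: verifying that the forest present after the weight-$<w(e)$ prefix is a basis of $\M|_S$, and that all bases of $\M|_S$ have the same closure. Everything else is unwinding the definitions of stability, genericity, and the greedy algorithm.
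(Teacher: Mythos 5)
Your proof is correct, and it takes a recognisably different route from the paper's. The paper argues by contradiction with an explicit exchange of paths: assuming the stable edge $r=\{s,t\}$ is missing from $T'_<$, every $s$--$t$ path in the forest of $T'_<$ built before $r$ uses edges outside $T_<$, each of which closes a cycle in an earlier sub-forest of $T_<$; substituting these paths shows $s$ and $t$ were already connected in $\widetilde{T}_<$, contradicting $r\in T_<$. You instead isolate the run-independent criterion directly: since a stable edge has (for generic $h$) a weight distinct from all other edges, every valid greedy run examines it exactly after the set $S$ of strictly lighter edges, at which point the current forest is a basis of $\M|_S$ by the standard Kruskal invariant, and the acceptance test ``$e\notin\operatorname{cl}_{\M}(S)$'' depends only on $S$, not on tie-breaking. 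This buys you two things: it makes explicit the genericity/stability hypothesis that the paper uses only tacitly (no edge ties with the stable edge), and it sidesteps the paper's incidental claim that the offending path edges are instable, which the path-substitution argument does not actually need and which is not justified there; your argument also transfers verbatim to greedy bases of an arbitrary weighted matroid. The two auxiliary facts you flag (the forest after the light prefix is a maximal independent subset of $S$, and all bases of a set have the same closure) are standard and unproblematic, so there is no gap; the paper's proof, by contrast, is shorter and purely graph-theoretic but leaves the run-independence mechanism implicit.
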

\begin{proof}
Assume that the stable edge $r=\{s,t\}\in E(T_<)$ does not appear in $T^{'}_<$, i.e. it closes  a cycle when considered to be added to the forest $\widetilde{T}^{'}_<\subset T^{'}_<$. Then every $s$-$t$ path in $\widetilde{T}^{'}_<$ uses instable edges $r_I$ of $T^{'}_<$ which are not in $T_<$. These edges themselves close cycles in some subtree of $\widetilde{T}_<$ of $T_<$. Replacing each of the $r_I$ by a path in $\widetilde{T}_<$ implies that the nodes $s$ and $t$ belong to the same component of $\widetilde{T}_<$ before $r$ is added. Contradiction.
\end{proof}

%\begin{proposition} Let $T_1$ and $T_2$ be two possible outputs of the greedy algorithm on the weighted dual graph $\Gamma(h)$. If $T_1$ and $T_2$ differ, then they differ on instable edges only.
%\end{proposition}
%\begin{proof}
%Let $F\subset\Gamma(h)$ be a forest and let $r_1$ and $r_2$ be instable edges of the same instability range. If $F\cup\{r_1\}$ and $F\cup\{r_2\}$ are valid steps in the greedy algorithm but $F\cup\{r_1,r_2\}$ contains a cycle, then $F\cup\{r_1\}$ and $F\cup\{r_2\}$ have the same connected components.  
%\end{proof}

 We now define the \emph{MST-fan} $\mathcal{K}(h)$ of $h$ to be the collection of all \emph{MST-cones} $\mathcal{K}(T_<)$ where $T_<$ runs through the realizable ordered spanning trees of $\Gamma(h)$. 

\begin{theorem}\label{thm:Khisafan}
Let $\subdiv(h)$ be a regular triangulation of the point configuration $\A$.
The MST-fan $\mathcal K(h)$ is a pure fan in $\R^\A$ with full support $\text{supp}(\mathcal K(h)):=\bigcup_{c\in \mathcal K(h)} c=\seco(h)$. Two height functions $f,g\in\seco(h)$ are in the same cone of $\mathcal{K}(h)$ if and only if the greedy algorithm possibly produces the same spanning tree on $\Gamma(f)$ and $\Gamma(g)$, i.e. precisely if
 there is an ordered spanning tree $T_<$ of $\Gamma(h)$ with $\fil{f}=_\lozenge T_< =_\lozenge \fil{g}$.
\end{theorem}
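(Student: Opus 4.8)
The plan is to exhibit $\mathcal{K}(h)$ as a refinement of an explicit ``minimum‑spanning‑tree'' fan, so that the fan axioms become automatic, and then to read off purity, full support, and the stated equivalence from the description. Throughout I work on $\seco(h)$, where, by Proposition~\ref{th:sign}, the sign $\sigma_r$ of $\ell_\bullet(r)$ is constant; hence the tropical edge length $L_\bullet(r)=|\ell_\bullet(r)|=\sigma_r\,\ell_\bullet(r)$ is there a \emph{linear} and nonnegative functional, which I denote $w_r$. By Algorithm~\ref{algo:kruskalcone} and Proposition~\ref{lmm:uniquecone}, each $\mathcal{K}(T_<)$ is cut out of $\seco(h)$ by finitely many half-spaces, each bounded by one of the linear hyperplanes $\{w_r=w_{r'}\}$ (the in-tree and cut-edge conditions); let $\mathcal{H}$ be the arrangement of those hyperplanes that are non-trivial — by Proposition~\ref{lmm:circuitsupport} this can only be trivial when $r$ and $r'$ share a fundamental circuit, cf.\ Remark~\ref{samecircuit} — and let $\mathcal{F}$ be the fan obtained by slicing $\seco(h)$ along $\mathcal{H}$. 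Then $\mathcal{F}$ is a genuine fan with support $\seco(h)$ and each $\mathcal{K}(T_<)$ is a union of cones of $\mathcal{F}$. For full support, note that every generic $g\in\seco(h)$ — a dense set — yields, via the greedy algorithm on $\Gamma(g)=\Gamma(h)$, an ordered spanning tree $T_<$ that is realizable with $g\in\mathcal{K}(T_<)$; since there are finitely many ordered spanning trees and each $\mathcal{K}(T_<)$ is a closed cone contained in $\seco(h)$, the union $\bigcup_{T_<}\mathcal{K}(T_<)$ is closed and equals $\seco(h)$. Each $\mathcal{K}(T_<)$ is full-dimensional (the proposition after Algorithm~\ref{algo:kruskalcone}), so $\mathcal{K}(h)$ is pure of dimension $\#\A$.

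Next I would pin down the cones precisely via the concave piecewise-linear function
\[
\Phi\colon\seco(h)\to\R,\qquad\Phi(g):=\min\Big\{\textstyle\sum_{r\in E(T)}w_r(g)\ :\ T\text{ a spanning tree of }\Gamma(h)\Big\}\enspace,
\]
a minimum of finitely many linear forms, whose domains of linearity $C_T:=\{g:\sum_{E(T)}w_\bullet\le\sum_{E(T')}w_\bullet\ \forall T'\}$ form — together with their faces — a fan $\mathcal{F}_{\min}$ with support $\seco(h)$, the $C_T$ being its maximal cones. One checks that $\mathcal{K}(T_<)\subseteq C_T$ (the greedy algorithm can only output a minimum tree), that on $C_T$ each cut-edge condition $w_{\underline r}\le w_r$ holds automatically (else $T-\underline r+r$ would be strictly lighter), and hence that $\mathcal{K}(T_<)=C_T\cap\{w_{r_1}\le w_{r_2}\le\cdots\le w_{r_m}\}$ for $T_<=(r_1,\dots,r_m)$. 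Consequently, inside a fixed maximal cone $C_T$ the MST-cones are exactly the intersections of $C_T$ with the chambers of the braid arrangement $\{w_r=w_{r'}:r,r'\in E(T)\}$, which is a genuine fan structure on $C_T$.

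To finish, I would glue these fan structures. On a wall shared by two maximal cones $C_T,C_{T'}$ of $\mathcal{F}_{\min}$ one has $\sum_{E(T)}w_\bullet\equiv\sum_{E(T')}w_\bullet$ there, so $T$ and $T'$ differ only by instable exchanges and/or a single exchange $a\leftrightarrow b$ with $w_a=w_b$ on the wall; hence the braid arrangements of $E(T)$ and of $E(T')$ restrict to the \emph{same} arrangement on that wall, and the two subdivisions agree there. Therefore the subdivisions of the $C_T$ glue to a fan refining $\mathcal{F}_{\min}$; this fan is $\mathcal{K}(h)$, its cones are the $\mathcal{K}(T_<)$ together with all their faces, and two height functions $f,g\in\seco(h)$ lie in a common cone precisely when they lie in a common $\mathcal{K}(T_<)$, which by the description above is exactly the condition that the greedy algorithm can output one and the same ordered spanning tree $T_<$ on $\Gamma(f)$ and on $\Gamma(g)$, i.e.\ $\fil{f}=_\lozenge T_<=_\lozenge\fil{g}$.

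The hard part is the last step: the wall-compatibility of the braid subdivisions of adjacent maximal cones of $\mathcal{F}_{\min}$ — equivalently, the assertion that two distinct maximal MST-cones never share an interior point. This requires a careful account of how minimum spanning trees and greedy insertion orders change across a wall of $\mathcal{F}_{\min}$, with particular care for \emph{instable} edges (ridges sharing a fundamental circuit, cf.\ Remark~\ref{samecircuit}); here the two propositions preceding the theorem — that every stable edge lies in every valid greedy output, and that the remaining ambiguity in the output is confined to instability ranges — do the essential work. A minor additional point is the tie-breaking claim used above, namely that every point of $C_T\cap\{w_{r_1}\le\cdots\le w_{r_m}\}$, boundary points included, admits a run of the greedy algorithm producing $T_<$.
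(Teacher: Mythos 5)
Your purity and full-support arguments are fine, and the structural route you choose — linearizing the edge lengths on $\seco(h)$, introducing the piecewise-linear minimum-weight function $\Phi$ with its fan $\mathcal{F}_{\min}$ of linearity domains $C_T$, and identifying $\mathcal{K}(T_<)=C_T\cap\{w_{r_1}\le\cdots\le w_{r_m}\}$ — is genuinely different from the paper, which instead observes (Proposition \ref{lmm:uniquecone}, Remark \ref{rem:uniquecone}) that all MST-cones are carved out of $\seco(h)$ by inequalities drawn from one common pool of hyperplanes $\{\sigma_{r}\ell_\bullet(r)=\sigma_{r'}\ell_\bullet(r')\}$, so that the intersection of two of them is obtained by activating some of these inequalities and is hence a face of both. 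However, you have not actually proved the theorem: the step you defer as ``the hard part'' — that the subdivisions glue across walls of $\mathcal{F}_{\min}$, equivalently that distinct maximal MST-cones do not share interior points — \emph{is} the content of the fan property, and gesturing at the ambient sliced fan $\mathcal{F}$ and at the two propositions preceding the theorem does not close it. Being convex cones that are unions of cells of a common arrangement is not sufficient: in $\R^2$ with the two coordinate hyperplanes, the upper and the right half-plane are both convex unions of chambers, yet they intersect in a quadrant that is a face of neither. So an actual argument is needed, and your one-sentence description of how $T$ and $T'$ differ across a wall (``instable exchanges and/or a single exchange $a\leftrightarrow b$'') is an assertion, not a proof.

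Moreover, the target you set yourself is not quite the right one. In the presence of instable edges the statement ``two distinct maximal MST-cones never share an interior point'' fails: if $r$ and $\tilde r$ have the same fundamental circuit, then $\ell_\bullet(r)=\pm\ell_\bullet(\tilde r)$ identically (Remark \ref{samecircuit}), the greedy algorithm may output trees with different edge sets at \emph{every} $g\in\seco(h)$, and the corresponding MST-cones overlap in full dimension; the fan structure survives only because such cones are defined by identical systems of inequalities and therefore coincide as sets — exactly what the paper's common-hyperplane-pool argument captures and what your $\mathcal{F}_{\min}$-plus-braid-arrangement route must verify separately (coincidence, not disjointness). Finally, the ``minor'' tie-breaking claim, namely $C_T\cap\{w_{r_1}\le\cdots\le w_{r_m}\}\subseteq\mathcal{K}(T_<)$ including boundary points, is what your entire gluing strategy rests on and should be proved (a standard exchange argument for Kruskal's algorithm does it); without it you only have one inclusion, and the identification of the MST-cones inside $\mathcal{F}_{\min}$, hence the asserted description of the cones and the ``if and only if'' statement of the theorem, remains open.
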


\begin{proof}
If $\mathcal{K}(T_<)$ and $\mathcal{K}(T'_<)$ meet, then by Remark \ref{rem:uniquecone} their intersection is induced by some commonly active edge inequalities and therefore it is a face of both $\mathcal{K}(T)$ and $\mathcal{K}(T')$. The bijectivity is a consequence of Proposition \ref{lmm:uniquecone}.
\end{proof}
% OPEN
% OPEN
% OPEN
%\textcolor{red}{As a consequence, we obtain the MST-fan $\mathcal{K}(\A):=\cup \mathcal{K}(h)$ of $\A$ as a refinement of the secondary fan of $\A$.}\holger{I think ugly things may happen! In fact, nice intersections are only guaranteed within one secondary cone, taking orders into account.} 
For further study, we might also consider subfans of the form $\mathcal{K}(T)=\cup \mathcal{K}(T_<)$ collecting all realizable orders on a fixed tree $T$.

\begin{example*}[Example of Figure \ref{fig:khan3dim} continued]
The spanning tree we previously considered reads $T_<=\fil{h}=(0,1,2,3,5)$. Its MST-cone $\mathcal{K}(T_<)$ has four facets given by the four in-tree conditions $0<1<2<3<5$. The facets $(0<1), (3<5)$ and $(2<3)$ meet the interior of $\seco(h)$, whereas  the facet $(1<2)$ lies in the boundary of $\seco(h)$, i.e. no tree with the reversed condition $2<1$ is realizable. In Figure \ref{fig:Khan3d_coveringbyKruskalcones} the dotted lines describe the intersection of $\mathcal{K}(\{0,1,2,3,5\})$ and $\mathcal{K}(\{0,1,3,4,5\})$, which lies in the hyperplane $\ell_\bullet(2)= \ell_\bullet(4)$. The supports of the fans $\mathcal{K}(\{0,1,2,3,5\})$ and $\mathcal{K}(\{0,1,3,4,5\})$ are two adjacent full-dimensional cones. 
\end{example*}

% OPEN
% OPEN
% OPEN
%By the last remark of the previous example, the supports of the fans $\mathcal{K}(\{0,1,2,3,5\})$ and $\mathcal{K}(\{0,1,3,4,5\})$ both are cones. Yet, this doesn't need always 
%to be the case ... 
%\textcolor{red}{I think. If two ordered trees $\mathcal{K}(T_<)$ and $\mathcal{K}(T'_{<'})$  intersect 
%in codimension $1$ (Hyperplane $H$), is then $\mathcal{K}(T)\cap \mathcal{K}(T^{'})=H\cap\seco(h)$? Give counterexample.}

\begin{figure}[h!]
\centering
\includegraphics[scale=0.4]{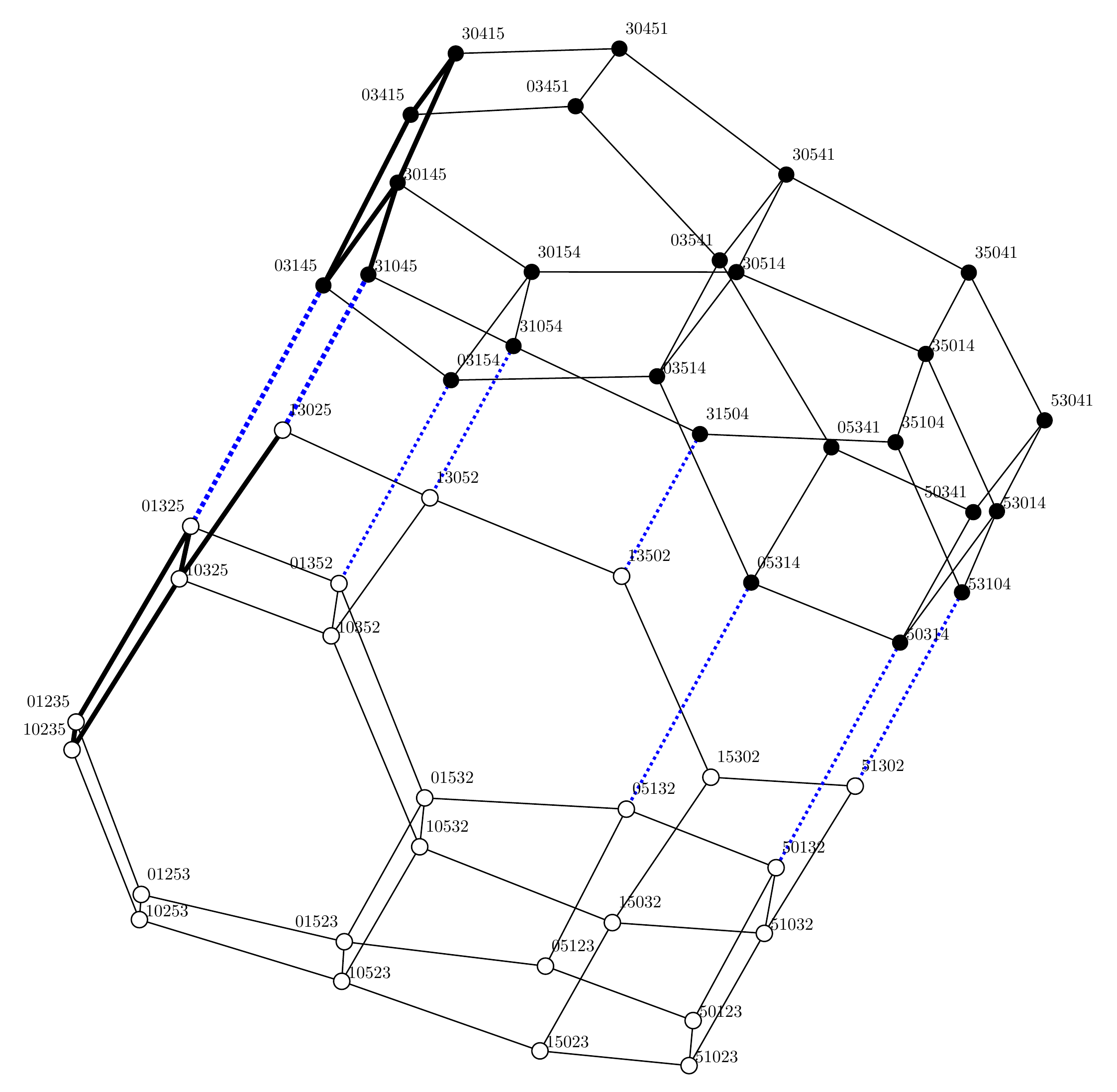}
\caption{The intersection of $\mathcal{K}(\{0,1,2,3,5\})$ and $\mathcal{K}(\{0,1,3,4,5\})$, cf. Figure \ref{fig:khan3dim}. The dual graph $\Gamma(h)$ has five spanning trees, each necessarily featuring edge $5$ and leaving out one of the other five edges. As a computation shows, the only realizable ordered spanning trees arise as orders on the trees $\{0,1,2,3,5\}$ (white nodes) and $\{0,1,3,4,5\}$ (black nodes). Hence, from $5\cdot 5!=600$ possible ordered spanning trees of $\Gamma(h)$, only $\frac{1}{12}\cdot 600=50$ are realizable. The thick subgraph on the upper left corresponds to the subfan of $\mathcal{K}(h)$ leaving edge $5$  fixed and the hyperplane $\ell_\bullet(2)= \ell_\bullet(4)$ is indicated by the dotted blue line segments.
All computations were done using \texttt{polymake}~\cite{polymake}.}
\label{fig:Khan3d_coveringbyKruskalcones}
\end{figure}

%
%\vspace {0.5 cm}
%\begin{algorithm}[H]
%        \SetKwIF{If}{ElseIf}{Else}{if}{then}{elif}{else}{}%
%        \DontPrintSemicolon
%        \SetKwProg{Fan}{Fan}{}{}
%        \LinesNotNumbered
%        \KwIn{labeled tree $T_L$, secondary cone $\subdiv$ 
%        }
%        \KwOut{$b(B)$}
%        \Fan(){$B=(H_I,H_J)$}{
%        		\nl $\mathcal{F}\longleftarrow \{\R^{|\A|}\}$\\
%                \nl  \For{$k=1,\ldots,|T_{L}|$}{
%                        \nl\label{chooseHyperplane_Alg2}\For{$r\in H(r_k)$}{
%                        \nl $\mathcal{F}\longleftarrow \mathcal{F}\cap F^{(e(r_k),e(r))}$
%                        }
%                }
%        \nl             \Return{$\mathcal{F}\cap \subdiv$}
%        }
%        \caption{Kruskal Fan, returning a big fan \label{algo:kruskalfan}}
%\end{algorithm}
%
%\vspace {0.5 cm}
%\begin{algorithm}[H]
%        \SetKwIF{If}{ElseIf}{Else}{if}{then}{elif}{else}{}%
%        \DontPrintSemicolon
%        \SetKwProg{Fan}{Fan}{}{}
%        \LinesNotNumbered
%        \KwIn{labeled tree $T_L$, secondary cone $\subdiv$ 
%        }
%        \KwOut{$b(B)$}
%        \Fan(){$B=(H_I,H_J)$}{
%        		\nl $\mathcal{F}\longleftarrow \{\R^{|\A|}\}$\\
%                \nl  \For{$k=1,\ldots,|T_{L}|$}{
%                        \nl\label{chooseHyperplane_Alg2}\For{$r\in H(r_k)$}{
%                        \nl $\mathcal{F}\longleftarrow \mathcal{F}\cap F^{(e(r_k),e(r))}_\Sigma$
%                        }
%                }
%        \nl             \Return{$\mathcal{F}$}
%        }
%        \caption{Kruskal Fan, returning a single cone \label{algo:kruskalcone}}
%\end{algorithm}

\hfill\\

\section{Permutation groups acting on tree orders}\label{sec:groupsandtrees}

Fix a realizable ordered spanning tree $T_<:=(r_1,\ldots,r_m):=\fil{h}\subset\Gamma(h)$. As explained in Section \ref{sec:MSTFan}, every instable edge $r_k$ of $T_<$ has an interval $I_k:=[i,i+j]$ with $i\leq k\leq i+j$, positive length $j$ and integer endpoints as its  instability range, i.e. where the linear forms $\ell_\bullet(r_i)=\ldots=\ell_\bullet(r_{i+j})$ coincide, cf. Remark \ref{samecircuit}. We may define the instability ranges of stable edges to be the singleton sets depicting their index in the edge order. This convention yields a partition of $\{1,\ldots,m\}$ into an ordered collection $I(T_<)$ of $w\leq m$ instability ranges. Note that in practice, occurences of instability ranges of positive length are frequently encountered in dimension $n\geq 4$, even several of them in the same tree. 

Now, the permutation group $G:=\mathcal{P}(w)$ naturally acts on $I(T_<)$. For a fixed permutation $\sigma\in G$, let $\mathfrak{m}(\sigma)$ be the minimum number of adjacent transpositions that the permutation $\sigma$ can be decomposed into.

\begin{proposition} Let $\sigma\in G$ and let the partitions $I(T_<)$ and $\sigma I( T_<)$ of $\{1,\ldots,m\}$ both be realizable, the latter by the tree $\sigma T_<$. If the MST-cones $\mathcal{K}(T_<)$ and $\mathcal{K}(\sigma T_<)$ intersect in codimension $k$, then $\mathfrak{m}(\sigma)<k+1$ holds. In particular, if the MST-cones $\mathcal{K}(T_<)$ and $\mathcal{K}(\sigma T_<)$ share a common facet, then $\sigma$ is an adjacent transposition. 
\end{proposition}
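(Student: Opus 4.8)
The statement relates the codimension of the intersection $\mathcal{K}(T_<)\cap\mathcal{K}(\sigma T_<)$ to the minimal number $\mathfrak{m}(\sigma)$ of adjacent transpositions needed to write $\sigma$. The plan is to argue that the facet hyperplanes of $\mathcal{K}(T_<)$ that are relevant here are exactly the in-tree hyperplanes $\ell_\bullet(r_{i_p})=\ell_\bullet(r_{i_{p+1}})$ comparing consecutive instability ranges (passing to the ordered collection $I(T_<)$ of ranges quotients out the redundant in-tree conditions of Remark \ref{samecircuit} and the cut-edge conditions, which only involve edges outside $T$, do not change the comparison structure among the tree edges). So both cones live inside the common secondary cone $\seco(h)$ and are each cut out of it by a chain of inequalities $\ell_\bullet(R_1)\le\ell_\bullet(R_2)\le\cdots\le\ell_\bullet(R_w)$ among the block-functions indexed by the $w$ ranges; passing from $T_<$ to $\sigma T_<$ permutes which chain of inequalities is imposed.

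Next I would read off the intersection: $\mathcal{K}(T_<)\cap\mathcal{K}(\sigma T_<)$ is the locus in $\seco(h)$ where \emph{both} chains hold, i.e. where $\ell_\bullet$ is simultaneously weakly increasing along the order $1<2<\cdots<w$ and along the order $\sigma^{-1}(1)<\sigma^{-1}(2)<\cdots$. For generic $h$ the functionals $\ell_\bullet(R_1),\dots,\ell_\bullet(R_w)$ are in general position, so this common locus is the linear slice obtained by forcing $\ell_\bullet(R_a)=\ell_\bullet(R_b)$ for every pair $(a,b)$ that the two total orders rank oppositely — that is, every \emph{inversion} of $\sigma$. Its codimension is therefore the number of equalities needed to witness all inversions, which is the number of blocks of the partition of $\{1,\dots,w\}$ into maximal runs that $\sigma$ keeps in order, minus one; equivalently it is governed by $\mathfrak{m}(\sigma)$. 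The key combinatorial inequality is $\mathfrak{m}(\sigma)\le k$ whenever $\sigma$ has an inversion set that can be covered by merging $\le k$ adjacent pairs — concretely, if the intersection has codimension $k$ then the two chains of inequalities collapse $w$ blocks into $w-k$ blocks, each collapsed block being an interval on which $\sigma$ acts, and a permutation that only shuffles within $k$ collapsed intervals of consecutive indices is a product of at most $k$ adjacent transpositions, giving $\mathfrak{m}(\sigma)\le k<k+1$. For $k=1$ exactly one pair of consecutive ranges is equated, so $\sigma$ is forced to be the transposition of those two adjacent ranges.

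The main obstacle I anticipate is the genericity/general-position bookkeeping: one must be careful that the active equalities on the intersection really are \emph{exactly} those forced by the inversions of $\sigma$ and not fewer (which would make the codimension smaller than expected) nor more (spurious coincidences among the $\ell_\bullet(R_a)$ inside $\seco(h)$). This is where the hypothesis that $h$ is generic and that both orders are realizable does the work: realizability of $I(T_<)$ guarantees each consecutive in-tree hyperplane that is not already forced to an equality actually meets the interior of $\seco(h)$ (Remark \ref{rem:uniquecone}), so no inequality is vacuous, and genericity rules out accidental linear dependencies among distinct block-functionals. Once that is pinned down, the remaining content is the elementary fact from the combinatorics of permutations that the number of adjacent transpositions in a minimal decomposition is bounded by the number of ``breakpoints'' one is allowed to cross, which I would state and use without a long digression.
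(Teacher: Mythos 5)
Your overall setup agrees with the paper's: both cones are cut out of $\seco(h)$ by chains of in-tree inequalities among the block functionals attached to the $w$ instability ranges, and on $\mathcal{K}(T_<)\cap\mathcal{K}(\sigma T_<)$ every in-tree condition that the two orders rank oppositely becomes an equality, i.e. contributes a hyperplane containing the intersection. (One side remark: your claim that the intersection is \emph{exactly} the locus forced by the inversions of $\sigma$ is not needed and is in fact at odds with the Remark immediately after the proposition, which points out that cut-edge conditions may flip as well; but extra active hyperplanes only increase the codimension, so this does not hurt the inequality you are after.)

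The genuine gap is your final combinatorial step: ``a permutation that only shuffles within $k$ collapsed intervals of consecutive indices is a product of at most $k$ adjacent transpositions'' is false. A collapsed interval consisting of $j+1$ consecutive ranges contributes only $j$ independent equalities, hence only $j$ to the codimension, while the permutation induced on it may require up to $\binom{j+1}{2}$ adjacent transpositions; since $\mathfrak{m}(\sigma)$ is the inversion number of $\sigma$, already $w=3$ with $\sigma$ the full reversal gives one collapsed interval, $k=2$ by your count, but $\mathfrak{m}(\sigma)=3$. So your route only yields a bound of the shape $\mathfrak{m}(\sigma)\le\binom{k+1}{2}$ (inversions confined to the collapsed intervals), not $\mathfrak{m}(\sigma)\le k$ as the statement requires. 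The paper argues differently and more directly: it decomposes $\sigma$ into adjacent transpositions and charges to each one a reverted non-empty in-tree condition, i.e. one hyperplane meeting the interior of $\seco(h)$ that must contain the intersection, which is how the codimension is bounded below by $\mathfrak{m}(\sigma)$; to repair your write-up you would need this charging argument (made precise) or some substitute for the false interval lemma. Your treatment of the special case $k=1$ does survive, since codimension one leaves room for only one flipped in-tree condition, hence inversions only between one adjacent pair of ranges, forcing $\sigma$ to be that adjacent transposition.
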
 
\begin{proof}
Each adjacent transposition reverts one non-empty in-tree condition.
\end{proof}

\begin{remark} Let $\tau$ be an adjacent transposition. Then $\dim(\mathcal{K}(T_<)\cap \mathcal{K}(\tau T_<))$ may be of codimension less than one, since there might be cut-edge conditions in the first place which change their direction when reverting an in-tree condition.
\end{remark}

\section{Matroids and Bergman fans}\label{sec:TropGeom}

\subsection{Bergman fans} We follow \cite{ArdilaKlivans} in our presentation. Let $\mathcal{M}=(E,\mathcal{B})$ be a matroid. Any $w\in\R^E$ can be seen as a \emph{valuation} of bases by setting $w(B)=\sum_{b\in B} w(b)$ for any base $B\in\mathcal{B}$. The \emph{initial matroid} $\mathcal{M}_w$ is the matroid on the same ground set $E$ with bases given precisely by those $B\in\mathcal{B}$, whose evaluation $w(B)$ is minimal among all evaluations of bases in $\mathcal{B}$. The \emph{Bergman fan} $\mathfrak{B}(\M)$ is the set of all $w\in \R^E$ such that the initial matroid $\mathcal{M}_w$ has no loops. For instance, if we fix some undirected graph $G$, then $\mathfrak{B}(\M(G))$ is the set of all edge weights $w\in \R^{E(G)}$ such that any edge of $G$ lies in some $w$-minimum spanning tree of $G$. On $\mathfrak{B}(M)$ there are two established fan structures. First, the \emph{coarse subdivision} considers $v,w\in\R^E$ to be equivalent if their initial matroids $\mathcal{M}_v$ and $\mathcal{M}_w$ coincide. It is the coarsest possible fan structure on $\mathfrak{B}(\mathcal{M})$. 
Second, any valuation $w\in \R^E$ gives rise to a \emph{flag} $\mathcal{F}(w):=\{\emptyset=:F_0\subset F_1 \subset \ldots F_{k+1}:=E \}$ of subset of $E$, such that $w$ is constant on each $F_i\setminus F_{i-1}$ and is strictly increasing with $i$, i.e. $w(x)<w(y)$ for any $x\in F_i$ and $y\in F_{i+1}\setminus F_i$. The \emph{fine subdivision} of $\mathfrak{B}(\M)$ considers valuations $v,w\in \R^E$ to be equivalent if their flags $\mathcal{F}(v)$ and $\mathcal{F}(w)$ coincide. The mere set $\mathfrak{B}(\M(G))\subset \R^E$ is also called the \emph{tropical linear space} $\text{trop}(\M)$ of the matroid $\M$.

An important attribute of matroid theory is that there are several ways, called \emph{cryptomorphisms}, to define a matroid, all being equivalent but not in an obvious way. For instance, one may use  \emph{rank functions} to do so. Given a matroid $\M=(E,\mathfrak{I})$ with independence system $\mathfrak{I}$, cf. Section \ref{matroids}, the rank $\rho(A)$ for any subset $A\subset E$ is defined to be the maximal cardinality of an independent set lying inside $A$. A \emph{flat} of $\M$ is a subset of $F\subset E$ such that $\rho(F\cup\{x\})>\rho(F)$ holds for any $x\notin F$. The set of flats of $\M$ forms a geometric lattice which fully describes the combinatorics of $\M$, i.e. giving the lattice of flats is another way to cryptomorphically define matroids.

\begin{proposition}\cite[Theorem 1]{ArdilaKlivans} The fine subdivision realizes the lattice of flags of $\mathcal{M}$.
\end{proposition}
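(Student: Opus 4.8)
The plan is to prove the combinatorial statement behind this assertion: the relatively open cones of the fine subdivision of $\mathfrak{B}(\M)$ are in bijection with the flags (chains) of flats of $\M$, in such a way that the face poset of the fine subdivision is exactly the poset of such flags ordered by refinement, i.e. the order complex of the proper part of the geometric lattice $L(\M)$. Throughout I assume $\M$ loopless, so that $\emptyset$ is a flat and $\mathfrak{B}(\M)\neq\emptyset$; otherwise both sides are empty. I will also use the standard greedy description of minimal-weight bases: for $w\in\R^E$ with flag $\mathcal{F}(w)=(\emptyset=F_0\subsetneq F_1\subsetneq\cdots\subsetneq F_{k+1}=E)$, a basis $B$ of $\M$ minimizes $w(B)$ if and only if $|B\cap F_i|=\rho(F_i)$ for all $i$, since sweeping the level sets of $w$ in increasing order produces precisely these bases.

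The heart of the argument is the claim: $w\in\mathfrak{B}(\M)$ if and only if every term $F_i$ of $\mathcal{F}(w)$ is a flat of $\M$. For the forward implication I argue contrapositively. If some $F_i$ is not a flat, choose $x\notin F_i$ with $\rho(F_i\cup\{x\})=\rho(F_i)$; taking a basis $I$ of $\M|_{F_i}$, the set $I\cup\{x\}$ is dependent and hence contains a circuit $C$ with $x\in C$ and $C\setminus\{x\}\subseteq F_i$. Since the level sets of $w$ strictly increase along the flag and $x\notin F_i$, we get $w(x)>w(y)$ for every $y\in C\setminus\{x\}$, so $x$ is the unique maximal-weight element of $C$. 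Now a fundamental-circuit exchange shows $x$ lies in no minimal-weight basis: if $B\ni x$ is a basis, then $\overline{B\setminus\{x\}}$ is a hyperplane not containing $x$, so it cannot contain all of $C\setminus\{x\}$ (else it would contain $x\in\overline{C\setminus\{x\}}$); picking $y\in C\setminus\{x\}$ with $y\notin\overline{B\setminus\{x\}}$, the set $B'=(B\setminus\{x\})\cup\{y\}$ is a basis with $w(B')<w(B)$. Thus $x$ is a loop of $\M_w$ and $w\notin\mathfrak{B}(\M)$. For the reverse implication, assume every $F_i$ is a flat; then $\rho(F_{i-1})<\rho(F_i)$ for nested flats, and for $x\in F_i\setminus F_{i-1}$ one has $x\notin\overline{F_{i-1}}=F_{i-1}$, so starting from a basis of $\M|_{F_{i-1}}$ we may adjoin $x$ and then greedily enlarge inside $F_i,F_{i+1},\dots,E$ to obtain a basis $B$ with $x\in B$ and $|B\cap F_j|=\rho(F_j)$ for all $j$; by the greedy description $B$ is minimal-weight, so $x$ is not a loop of $\M_w$. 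As $x$ was arbitrary, $w\in\mathfrak{B}(\M)$.

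With the claim established the proposition follows formally. By the claim the flag of any $w\in\mathfrak{B}(\M)$ is a chain of flats, and two valuations lie in the same fine-cone exactly when they induce the same chain; conversely every chain of flats $\emptyset\subsetneq F_1\subsetneq\cdots\subsetneq F_k\subsetneq E$ arises, for instance from the valuation assigning value $i$ to each element of $F_i\setminus F_{i-1}$, which is in $\mathfrak{B}(\M)$ by the claim. Hence the relatively open cones of the fine subdivision are the sets $\sigma_{\mathcal{F}}=\{\,w: w|_{F_i\setminus F_{i-1}}\text{ constant and strictly increasing in }i\,\}$ indexed by chains of flats $\mathcal{F}$, each the relative interior of a simplicial cone modulo the lineality line $\R\mathbf{1}_E$, and $\sigma_{\mathcal{F}'}$ is a face of $\sigma_{\mathcal{F}}$ precisely when $\mathcal{F}'$ is obtained from $\mathcal{F}$ by deleting terms. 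Therefore the face poset of the fine subdivision is the poset of flags of flats of $\M$, which is the asserted realization of the lattice of flags.

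The main obstacle is the claim, and inside it the forward direction: one must match the convention that $\M_w$ is built from bases of \emph{minimal} weight with the correct circuit condition (the \emph{maximum} on each circuit attained at least twice) and then run the circuit exchange carefully. The reverse direction is a routine greedy construction once one notes that nested flats have strictly increasing rank. A secondary point worth a line is that every chain of flats refines to a maximal one with $\rho(F_i)=i$ because $L(\M)$ is graded; this identifies the maximal cones of the fine subdivision with the maximal flags and shows $\mathfrak{B}(\M)$ is pure of dimension $\rho(E)-1$ modulo lineality, consistent with calling the fine subdivision a realization of the lattice of flags.
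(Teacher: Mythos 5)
The paper does not prove this proposition itself---it is quoted from \cite[Theorem 1]{ArdilaKlivans}---so there is no internal proof to compare against; your argument is a correct, self-contained reconstruction of the cited result. The key claim you isolate ($w\in\mathfrak{B}(\M)$ if and only if every member of $\mathcal{F}(w)$ is a flat) is exactly the paraphrase the paper states right after the proposition, and your proof of it via the greedy characterization of minimum-weight bases together with the cycle-property exchange is essentially the standard Ardila--Klivans argument, correctly adapted to the paper's minimum-weight, increasing-flag convention.
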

In other words, a flag $\mathcal{F}$ of subsets of $E$ can be written as $\mathcal{F}(w)$ for some $w\in \R^E$ precisely if every member of $\mathcal{F}$ is a flat of $\mathcal{M}$. Hence, the faces of $\mathfrak{B}(\M)$ are given by $\text{pos}(e_{F_0},\ldots, e_{F_{k+1}})$ where $F_0\subset\ldots\subset F_{k+1}$ is a chain of flats of $\mathcal{M}$.

\subsection{The MST-fan as part of a Bergman fan} 
Let again $\A\subset \R^n$ be a finite point configuration and let $h\colon \A\to\R$ be a height function on $\A$. We consider the dual graph $G:=\Gamma(h)\subset\subdiv(h)$ and the cycle matroid $\mathcal{M}(G)$. Consider the map
\begin{align*}
l\colon \R^\A&\longrightarrow \R^{E(G)}\\
h&\longmapsto (\ell_h(r))_{r\in E(G)} \enspace . 
\end{align*}

Since the graph $G$ is naturally weighted by the tropical edge lengths, one may consider the restriction $\hat{\mathfrak{B}}(\M(G)):=\{w\in l(\R^\A)\colon \M(G)_w \text{ has no loops}\}$, where we only allow tropical edge lengths as valuations. 

A collection $\mathcal{C}=\{\mathcal{K}(T^{(\mathcal{C}_i)})\}$ of maximal cones in $\mathcal{K}(h)$ is called \emph{$G$-saturating} if the ordered spanning trees $T^{(\mathcal{C}_i)}$ cover $G$, i.e. if $E=\cup_i E(T^{(\mathcal{C}_i)})$ holds.
Let $C\subset \R^\A$ be the polyhedral complex with cells $\bigcap\mathcal{K}(T^{(\mathcal{C}_i)})$ where  $\mathcal{C}$ runs through the $G$-saturating collections.

\begin{corollary}  The map $l$ embeds the complex  $C$ into $\mathfrak{B}(\M)$ with image $\hat{\mathfrak{B}}(\M(G))$. The induced fine subdivision on $\hat{\mathfrak{B}}(\M(G))\subset \mathfrak{B}(\M(G))$ comes from the cones $\mathcal{K}(T_<)$, whereas the coarse subdivision is perceived by dropping the order and considering $\mathcal{K}(T)=\cup\mathcal{K}(T_<)$.
\end{corollary}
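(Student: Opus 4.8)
The plan is to establish the corollary in three movements: first identify the image $l(\R^\A)$ inside $\R^{E(G)}$ and check that the constraint "$\mathcal{M}(G)_w$ has no loops'' pulls back exactly to the union of $G$-saturating intersections; second, match the fine and coarse fan structures on the two sides; third, verify injectivity of $l$ on $C$. I would begin by recalling from Proposition~\ref{th:sign} and Proposition~\ref{lmm:uniquecone} that on each secondary cone $\seco(R)$ the signs $\sigma_r$ are constant, so that after the fixed sign twist $w_r := \sigma_r \ell_\bullet(r)$ the map $l$ becomes a genuine linear map $\R^\A\to\R^{E(G)}$ whose image is a linear subspace; this is the ambient space into which $C$ will embed. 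An edge $r$ of $G$ lies in some $w$-minimum spanning tree precisely when $r$ is a non-loop of the initial matroid $\mathcal{M}(G)_w$ (this is the standard matroid-theoretic reformulation stated in the definition of $\mathfrak{B}(\M(G))$ via the greedy/Kruskal criterion). So $w=l(h)\in\hat{\mathfrak{B}}(\M(G))$ iff every edge of $G$ is used by \emph{some} realizable minimum spanning tree at $h$, which is exactly the condition that $h$ lies in $\bigcap\mathcal{K}(T^{(\mathcal{C}_i)})$ for some $G$-saturating collection $\mathcal{C}$, by the definition of realizability and Algorithm~\ref{algo:kruskalcone}. Hence $l(C)=\hat{\mathfrak{B}}(\M(G))$ as sets.

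Next I would match the fan structures. For the fine subdivision: by Proposition~\ref{def:edgelinearform} and Remark~\ref{samecircuit}, two generic $h,g$ lie in the same cone $\mathcal{K}(T_<)$ iff the greedy algorithm produces the same edge-ordered tree, which—via the in-tree and cut-edge conditions of Section~\ref{sec:algo}—is equivalent to saying that $w=l(h)$ and $l(g)$ induce the same total preorder on $E(G)$ refined to the same flag, i.e. $\mathcal{F}(l(h))=\mathcal{F}(l(g))$. By \cite[Theorem~1]{ArdilaKlivans} (the Proposition quoted just above) these flags are exactly chains of flats of $\mathcal{M}(G)$, so the cones $\mathcal{K}(T_<)$ map onto the fine-subdivision cones $\mathrm{pos}(e_{F_0},\dots,e_{F_{k+1}})$ of $\mathfrak{B}(\M(G))$, intersected with the linear image $l(\R^\A)$. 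For the coarse subdivision: forgetting the order on $T_<$ and passing to $\mathcal{K}(T)=\cup_{<}\mathcal{K}(T_<)$ corresponds on the matroid side to remembering only which edges are forced/forbidden, i.e. only the initial matroid $\mathcal{M}(G)_w$ rather than the full flag; since $\mathcal{M}_v=\mathcal{M}_w$ exactly when $v,w$ agree on which bases are minimal, and a spanning tree $T$ is the minimal basis iff $w$ lies in the Kruskal cone of $T$ up to order, this is precisely the coarse equivalence. Thus $l$ carries the two fan structures on $C$ to the restrictions of the fine and coarse subdivisions of $\mathfrak{B}(\M(G))$.

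Finally, injectivity of $l$ on $C$: it suffices that $l$ restricted to $\seco(h)$ is injective, and this follows because the tropical edge lengths of enough ridges determine $h$ up to the affine ambiguity that is already quotiented out in $\R^\A$—concretely, pick a spanning tree $T$ of $\Gamma(h)$; the $n+1$ entries of $h$ on the vertices of one maximal simplex are fixed by the affine normalization, and each further simplex across a ridge $r\in T$ adds one new height coordinate whose value is pinned down by $\ell_\bullet(r)$ through the cofactor expansion of \eqref{edgelength}, so $l$ is injective on the relevant coordinates. The main obstacle I anticipate is the bookkeeping around \emph{instable} ridges: when several ridges share a fundamental circuit, the map $l$ is not injective in those coordinates and the flag $\mathcal{F}(l(h))$ genuinely lumps them together, so one must check that the $\lozenge$-ambiguity in the definition of $\mathcal{K}(T_<)$ is precisely calibrated to this lumping—i.e. that passing to $C$ (intersections over $G$-saturating collections) rather than to a single $\mathcal{K}(T_<)$ is exactly what is needed for $l$ to be well-defined and bijective onto $\hat{\mathfrak{B}}(\M(G))$. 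Making that calibration precise, rather than the linear-algebra or matroid inputs, is where the real work lies.
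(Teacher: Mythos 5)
The paper states this corollary without a written proof, so the comparison is with the mathematics itself. The set-theoretic half of your argument is fine: with your sign normalization $w_r=\sigma_r\ell_\bullet(r)$, a point $l(h)$ lies in $\hat{\mathfrak{B}}(\M(G))$ exactly when every edge of $G$ lies in some minimum spanning tree at $h$, i.e.\ when the maximal MST-cones containing $h$ form a $G$-saturating collection, which is membership in a cell of $C$. The genuine gap is in the fine-subdivision step, which rests on a false equivalence: you claim that two generic $h,g$ lie in the same cone $\mathcal{K}(T_<)$ iff $\mathcal{F}(l(h))=\mathcal{F}(l(g))$. But $\mathcal{K}(T_<)$ is cut out only by the in-tree and cut-edge inequalities of Section~\ref{sec:algo}; it does not constrain the relative order of two non-tree edges, nor of a non-tree edge against tree edges above its cut edge, so the flag is not constant on $\mathcal{K}(T_<)$. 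Worse, a generic point of $\mathcal{K}(T_<)$ has pairwise distinct edge lengths, hence a unique minimum spanning tree, so unless $G$ is itself a tree its image does not lie in $\mathfrak{B}(\M(G))$ at all; the assertion that ``the cones $\mathcal{K}(T_<)$ map onto the fine-subdivision cones'' cannot be literally correct. The comparison must be made on the cells of $C$ (intersections over $G$-saturating collections), and what is actually needed is: for $h$ in the relative interior of such a cell, the collection of ordered trees $T_<$ with $h\in\mathcal{K}(T_<)$ determines the full weak order of the edge lengths, and conversely. That requires an argument --- for instance via the fact that all minimum spanning trees for fixed weights have the same sorted weight sequence, so the admissible positions of an edge in a greedy order pin down its weight class --- and nothing of this kind appears in your write-up; you yourself flag this ``calibration'' as the remaining work, but it is exactly the content of the fine-subdivision claim, so the core of the corollary is asserted rather than proved.

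The embedding claim is also not established as stated. Each linear form $\ell_\bullet(r)$ vanishes on every height function that is the restriction of an affine function on $\R^n$ (the last column of the matrix in (\ref{edgelength}) is then a combination of the earlier ones), and it ignores the heights of points of $\A$ not used by $\subdiv(h)$, since these lie in no fundamental circuit (Proposition~\ref{lmm:circuitsupport}). Hence $l$ has a kernel of dimension at least $n+1$ contained in the lineality space of $\seco(h)$ and is \emph{not} injective on $\seco(h)$ or on $C$; your assertion that the affine ambiguity ``is already quotiented out in $\R^\A$'' is false in the paper's setup, where $\R^\A$ is the full space of height functions. Your tree-induction does prove the correct weaker statement --- the new vertex across a ridge always lies in the fundamental circuit, so its height coefficient is nonzero and $l$ is injective modulo affine functions (and modulo unused points) --- which is the honest sense in which $l$ ``embeds'' $C$; the write-up should state the embedding modulo this kernel rather than claim injectivity on $\seco(h)$.
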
 

\begin{example*}In Figure \ref{fig:Khan3d_coveringbyKruskalcones}, the coarse subdivision induced on $\hat{\mathfrak{B}}(\M(G))$ corresponds to the dotted blue part given by the hyperplane $\ell_\bullet(2)= \ell_\bullet(4)$ and is therefore trivial. There are eight distinct dotted blue line segments and they constitute the fine subdivision.
\end{example*}

\section{Application in population genetics: epistasis and spanning trees}\label{sec:EpistaticInteractions}
\subsection{Regular subdivisions and epistatic weights}
Epistatic filtrations are introduced in \cite{Eble2020} and \cite{Eble2019} to track significant epistatic interactions in a fitness landscape $(\A,h)$. There, the function ${h\colon\A\to\R}$ is an experimentally obtained genotype-phenotype map  on a biallelic (or multiallelic) $n$-loci \emph{genotype set} $\A=\{0,1\}^n$, resp.  $\A=\text{vert}(\prod_i \Delta_i)$ in the multiallelic case.  As in the biology literature can be found \cite{Fisher1918}, an affinely independent genotype subsystem $r=\{v_i\colon i\in I\}\subset\A$ is called an \emph{epistatic interaction} if the lifted genotype set $r^h=\{(v_i,h(v_i))\colon i\in I\}\subset \A^h\subset\R^{n+1}$ is affinely independent as well.

A special kind of epistatic interactions can be indicated by $\A$-bipyramids, or $\A$-ridges, i.e. pairs $(s,t)$ of adjacent $n$-simplices $s = \conv \{v_1,\ldots,v_{n+1}\}$ and $t=\conv\{v_2,\ldots,v_{n+2}\}$
with vertices $v_i\in \A$, sharing a common facet. The \emph{epistatic weight} $e_h(s,t)$ of the bipyramid $r=(s,t)$ measures the degree of non-additivity of $h$ on $r$  and is defined in \cite{Eble2019} as
\[e_h(r):=e_h(s,t):= L_h(s,t)\cdot \lambda(s,t)\]
with $\lambda(s,t):=  \text{nvol}(A\cap B)/(\text{nvol}(A)\cdot \text{nvol}(B))$, which doesn't depend on $h$. Thus, the bipyramid $r$ is an epistatic interaction precisely if $e_h(r)>0$. 

Bipyramids occur in a regular triangulation $\subdiv(h)$ as adjacent maximal cells. The transfer from the regular subdivision $\subdiv(h)$ to an according biological statement is now made via the following linear optimization problem $\text{LP}(h,w)$, introduced in \cite{BPS2007}:
\begin{equation}\label{eq:LP}
  \begin{array}{ll}
    \text{maximize} & h \cdot p\\
    \text{subject to} & p\in\Delta_{2^n} \text{ and } \rho(p):=\sum_{v\in\A}p(v)v = w\enspace,
  \end{array}
  \tag{$\text{LP}(h,w)$}
\end{equation}
for some given \emph{allele frequency} $w\in [0,1]^n$ and with decision variable $p$ ranging over all \emph{relative populations} $p\in\Delta_{2^n}=\{p\in \R^\A \colon \sum p_i=1\}$ on  $\A=\{0,1\}^n$, presented here in the biallelic case only. Since $h$ is experimentally obtained and hence generic, there is a unique optimal solution $p^*(h,w)\in\rho^{-1}(w)\subset\Delta_{2^n}$ of (\ref{eq:LP}). The  map 
\begin{align*}
h^*\colon [0,1]^n &\to \R\\
w & \mapsto h\cdot p^*(h,w)
\end{align*}
equals the convex support function $f_{\subdiv(h)}$ from Remark \ref{rmk:ConvexSupportFunction} and therefore its linear regions coincide with the maximal simplices of $\subdiv(h)$. Further, given $s$ and $t$ as above as cells in $\subdiv(h)$,  the quantity $e_h(s,t)$ measures the degree to which the exposed vertex $v_1$ resp. $v_{n+2}$ has a lower than expected phenotype assuming that $h$ extends affinely from $t$ resp. $s$ to the bipyramid $(s,t)$. In this sense $e_h(s,t)$ measures \emph{negative epistasis}, cf. \cite{PhillipsEpistasis}.
The polyhedral subdivision $\{\rho^{-1}(A)\colon A \text{ maximal simplex of } \subdiv(h)\}$ of $\Delta_{2^n}$ now stratifies $\Delta_{2^n}$ into \emph{fittest populations} with respect to varying allele frequencies.

\subsection{Epistatic filtrations}\label{epistaticfiltration}

The \emph{epistatic filtration} of $h$ serves as a tool to separate data noise from significant epistatic signal. It operates on the ascendingly ordered $e_h$-weighted dual graph $\Gamma(h)$. Here, we explicitely interpret the nodes of $\Gamma(h)$ as simplices in $\R^n$.

Geometrically, the set $\sum_0$ of maximal cells of $\subdiv(h)$ yields a decomposition of $[0,1]^n$ into $\#V(\Gamma(h))$ simplicial cells and marks the zero signal starting point of the filtration process.    
Assume $\sum_{k-1}$  is given as a cellular decomposition of the genotope $[0,1]^n$ and the $k$-th edge reads $r_k=(s_k,t_k)$. The decomposition $\sum_{k}$ is now constructed by unifying the cells $c_{ks}$ and $c_{kt}$ of $\sum_{k-1}$ which contain $s_k$ resp. $t_k$ . If one has $c_{ks}\neq c_{kt}$, the edge $r_k$ is called \emph{critical} since the epistatic information can be considered as irredundant in this case. 
The \emph{epistatic filtration} of $h$ is the collection of all critical edges.
In the biological context, late cell agglutinations reflect strong epistatic effects and, since dual graphs are connected, the critical edge with the highest epistatic weight always glues a two-cell split, cf. Figure \ref{fig:khan3dim}(B).

\begin{lemma}
Epistatic filtrations are precisely the minimum spanning trees of $\Gamma(h)$.
\end{lemma}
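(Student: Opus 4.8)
The plan is to show that the two sets coincide by matching the iterative construction of the epistatic filtration with the run of Kruskal's greedy algorithm on the edge-weighted dual graph $\Gamma(h)$. Recall that the epistatic weight $e_h(r) = L_h(r)\cdot \lambda(s,t)$ differs from the tropical edge length only by the positive scalar $\lambda(s,t)$, so up to this reweighting both algorithms operate on the same combinatorial object; I will first observe that the greedy/Kruskal procedure depends only on the \emph{total order} of the edge weights, and that the filtration likewise processes edges in ascending weight order, so it suffices to compare the two when edges are considered one at a time from lightest to heaviest. Throughout I interpret a cell $c$ of the current decomposition $\sum_{k-1}$ of $[0,1]^n$ as the set of maximal simplices of $\subdiv(h)$ it contains, i.e.\ as a connected subgraph (a vertex-block) of $\Gamma(h)$; unifying cells corresponds exactly to merging connected components.

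Next I would argue the key equivalence step by step. Process the edges $r_1, r_2, \ldots$ of $\Gamma(h)$ in ascending order of $e_h$ (equivalently $L_h$). Maintain the partition of $V(\Gamma(h))$ into the components $\sum_{k-1}$ induced by the critical edges selected so far. When $r_k = (s_k, t_k)$ is examined: on the filtration side, $r_k$ is \emph{critical} precisely when $s_k$ and $t_k$ lie in distinct cells $c_{ks}\neq c_{kt}$, and in that case the two cells are merged; on the Kruskal side, $r_k$ is \emph{selected} precisely when adding it does not close a cycle, i.e.\ when its endpoints lie in distinct components, and in that case the components are merged. By induction on $k$ these two running partitions are identical at every stage — the inductive hypothesis is that after processing $r_1,\ldots,r_{k-1}$ the cells of $\sum_{k-1}$ equal the connected components of the Kruskal forest, and the step above shows the "critical" test and the "no-cycle" test make the same decision and the same merge. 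Hence an edge is critical for the filtration if and only if it is picked by Kruskal's algorithm, so the set of critical edges equals the edge set of the minimum spanning tree produced. Since $\Gamma(h)$ is connected (it is the $1$-skeleton of the tight span, which is connected), the filtration terminates with a single cell, i.e.\ the collected critical edges form a spanning tree.

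Finally I would close the loop to get \emph{every} minimum spanning tree, not just the one output by one run of greedy: varying the tie-breaking rule used to linearize the weight order (which is exactly the freedom recorded by the notation $=_\lozenge$ and by the instability ranges of Section~\ref{sec:groupsandtrees}) makes the greedy algorithm realize every minimum spanning tree of $\Gamma(h)$, a standard fact about the matroid $\M(G)$ — the bases of the cycle matroid attaining minimum weight are exactly the minimum spanning trees, and each arises from some admissible ordering. The filtration construction, which only references the ascending order and is similarly insensitive to ties up to choice, correspondingly produces each such tree. Conversely any minimum spanning tree, being a weight-minimal basis of $\M(G)$, is the output of greedy under a suitable ordering and hence is an epistatic filtration. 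The main obstacle I anticipate is purely bookkeeping rather than mathematical: making precise the identification "cell of $\sum_{k-1}$ $\leftrightarrow$ connected component of the current forest" and checking it is preserved under the merge operation — in particular that the replacement of an $r_I$-path by a path in $\widetilde{T}_<$, in the spirit of the proof of the preceding proposition, correctly tracks when two simplices have already been agglutinated; everything else follows from the matroid-theoretic description of minimum spanning trees together with the observation that $\lambda(s,t)>0$ does not disturb the ordering.
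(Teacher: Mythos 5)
Your proposal is correct and takes essentially the same route as the paper, whose proof is exactly the one-line observation that the cell-agglutination process of the filtration \emph{is} the greedy (Kruskal) algorithm on the weighted dual graph $\Gamma(h)$; you merely make the induction on the running partition and the tie-breaking explicit. One small caution: drop the parenthetical ``equivalently $L_h$'' --- since $\lambda(s,t)$ is edge-dependent, the ascending $e_h$-order need not coincide with the ascending $L_h$-order, but this does not harm your argument, which only ever uses the $e_h$-order.
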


\begin{proof}
The geometric process depicted in \ref{epistaticfiltration} describes the greedy algorithm on the underlying weighted dual graph $\Gamma(h)$.
\end{proof}

As described in \cite[Section 3.3]{Eble2019}, each epistatic filtration of $h$ gives rise to a unique rooted binary tree $\mathcal{T}(h)$ with leaves labeled by the maximal cells of the regular triangulation $\subdiv(h)$. In the biological context, one refers to such trees with a fixed number of labeled leaves as \emph{phylogenetic trees}. Moreover, there is a natural distance function on the leaf set of $\mathcal{T}(h)$ measuring genetic distances, which is given by the unique path lengths in the corresponding minimum spanning tree inside $\Gamma(h)$. 

\section{Computations}\label{sec:computations}

An extensive study about which trees are realizable as minimum spanning trees inside some dual graph $\Gamma(h)$ demands for an encoding of the regular triangulations of the underlying polytope. Thus, we need to look at classes of polytopes with known secondary fan. These are the \emph{totally splittable polytopes}, introduced and studied in \cite{Totally+splittable+polytopes}, whose regular subdivisions arise exclusively as refinements of splits, i.e. subdivisions with exactly two cells. Totally splittable polytopes comprise simplices, polygons, regular cross polytopes, and prisms and joins over simplices. The code used for the following computations can be found on the author's webpage \url{https://holgereble.github.io/}.

\subsection{Case study I: polygons, }

\begin{proposition} Let $P_n$ be an $n$-gon. The MST-fan structure, neglecting tree orders, is the same as the secondary fan structure.
\end{proposition}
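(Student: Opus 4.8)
The plan is to reduce everything to the classical fact that the dual graph of a triangulated polygon is a tree. View $P_n$ as the point configuration $\A$ consisting of its $n$ vertices, so that every regular triangulation $\subdiv(h)$ of $\A$ is an honest triangulation of the convex $n$-gon. First I would record that such a triangulation has exactly $n-2$ triangles joined along $n-3$ interior diagonals (immediate from Euler's formula applied to a triangulated disk), and that its dual graph is connected; hence $\Gamma(h)$ has $n-2$ nodes, $n-3$ edges and is connected, i.e. it is a tree.

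Next, since $\Gamma(h)$ is already a tree, it is its own unique spanning tree. Neglecting edge orders there is therefore a single candidate tree $T := \Gamma(h)$, and the discussion following Lemma~\ref{lmm:seccone_ridgegraph} gives $\seco(T) = \seco(h)$. For a generic $g \in \seco(h)$ the greedy algorithm run on the weighted graph $\Gamma(g) = \Gamma(h)$ necessarily inserts all $n-3$ edges (a greedy pass through a tree never discards an edge), producing $T$ in some realizable order; hence $g$ lies in $\mathcal{K}(T_<)$ for that order. I would then conclude
\[
\mathcal{K}(T) = \bigcup_{<} \mathcal{K}(T_<) = \seco(h),
\]
where the union runs over the realizable orders of $T$ and the second equality holds up to closure, which is harmless since the generic height functions are dense in $\seco(h)$.

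Finally I would assemble the global picture: inside every secondary cone $\seco(h)$ the order-free MST-fan consists of the single maximal cone $\seco(h)$, so letting $h$ range over all regular triangulations of $\A$ shows that the maximal cones of the order-free MST-fan structure on $\R^\A$ are precisely those of the secondary fan; being fans with the same maximal cones, they coincide. The only real content is the first step, identifying $\Gamma(h)$ as a tree, and the only point to watch is that this uses that $\A$ has no interior points (a triangulation of a polygon with interior vertices has a dual graph with cycles), which is exactly why the statement concerns the vertex set of $P_n$; everything after that is bookkeeping with the definitions of $\mathcal{K}(T_<)$ and $\seco(R)$.
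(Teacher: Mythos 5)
Your proposal is correct and follows essentially the same route as the paper, whose entire proof is the observation (cited from De Loera--Rambau--Santos, Section 1.1) that the dual graph of a triangulation of an $n$-gon is a tree; you simply supply the elementary count showing $\Gamma(h)$ is a tree and the routine bookkeeping that a unique spanning tree forces $\bigcup_{<}\mathcal{K}(T_<)=\seco(h)$, which the paper leaves implicit.
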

\begin{proof}
As the discussion in \cite[Section 1.1]{SantosTriangulations} shows, the dual graph of a triangulation of $P_n$ is itself a tree.
\end{proof}

\begin{figure}[h!]
 \centering
     \begin{subfigure}[b]{0.3\textwidth}
     \includegraphics[scale=1]{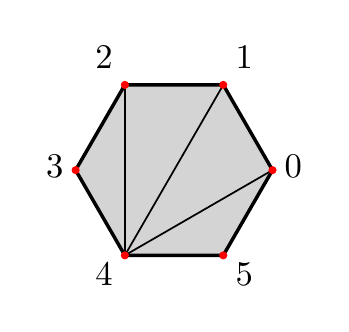}
     \end{subfigure} 
     \begin{subfigure}[b]{0.3\textwidth}
     \includegraphics[scale=1]{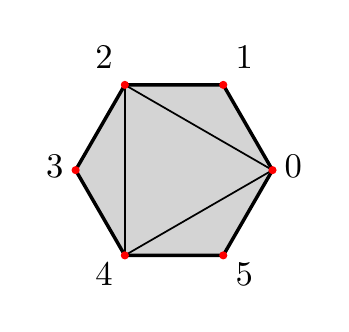}
     \end{subfigure}
     \begin{subfigure}[b]{0.3\textwidth}
     \includegraphics[scale=1]{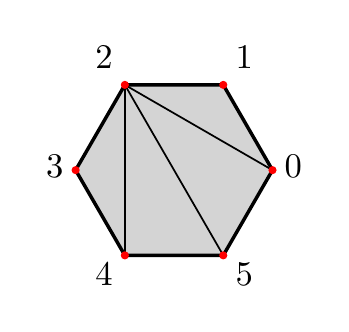}
     \end{subfigure}\caption{Three out of fourteen triangulations of the hexagon. The number of triangulations of the $n$-gon $P_n$ is given by the  Catalan number $C_{n-2}$, cf. \cite{SantosTriangulations}. The underlying dual graphs are trees with $(n-3)$ edges. Each of the $C_4$-many maximal cones of the secondary fan of $P_6$, i.e. the normal fan of a $3$-dimensional associahedron, is split by the MST-fan structure into $3!$ MST-cones according to the permutations of the edge order.} 
\end{figure}

\subsection{Case study II: totally splittable polytopes}
% kruskal_fan/data/computations/prismgon.pl 
% kruskal_fan/data/computations/prismgon.pl 
% kruskal_fan/data/computations/prismgon.pl 
% kruskal_fan/data/computations/prismgon.pl 

Let $\Pi P_n$ denote the prism $P_n\times I$ over the $n$-gon $P_n$. Moreover, for any polytope $P$ let $\Delta^{\text{reg}}(P)$ be the set of all regular triangulations of $P$ and let $\mathcal{T}(P)$ be the set of all ordered spanning trees that occur inside the dual graph of some regular subdivision of $P$. Similarly, let $\mathcal{T}^{\text{real}}(P)$ be the set of those ordered spanning trees, which occur inside some dual graph of $P$ and which are realizable as minimum spanning trees. The following computations were done in \texttt{polymake} \cite{polymake} on an ordinary office machine (\texttt{Intel Core i7-8700}).

\begin{table}[htpb]
        %\begin{scriptsize}
                \[\def\arraystretch{1.5}{
                        %\begin{array}{c| c c c c c c c c}
                        \begin{array}{c| r | r | r | r r r r r}                       
                        \text{Polytope} & \#\Delta^{\text{reg}}(P) & \# \mathcal{T}(P) & \#\mathcal{T}^{\text{real}}(P) &  \text{timings} \\
                        \hline
                        \hline
P_5 &  5&  5\cdot 2!=10& 10 & 1s\\
P_6 & 14 & 14\cdot 3!=84 & 84 & 16s\\ 
P_7 & 42 & 42\cdot 4!=1008 & 1008 & 289s\\
P_8 & 132 & 132\cdot 5!=15840 & 15840 & 1.5h\\
\hline
\Pi P_3 &  6& 12 & 12 & 2s\\
\mathbf{\Pi P_4} & 74 &  37632 & 4944 &  3.3h\\
                        \hline
\text{octahedron} &3 &  72 &  24& 15s\\
                        \hline
         
                        \end{array}}
                \]
        %\end{scriptsize}
        \caption{Calculating realizable minimum spanning trees. Polytopes with a more complicated secondary fan structure as the $3$-dimensional cube $\Pi P_4$, which is not totally splittable, feature many non-realizable spanning trees, cf. Figure \ref{fig:Khan3d_coveringbyKruskalcones}.}
        \label{tab:anotherTable}
\end{table}

%cube(3):
%trees found: 4944
%trees not found: 32688

%\newpage
%
%
%\subsection{Case study IV: The 6-gon, including tree orders}

%
%\begin{figure}[h!]
%\centering
%\includegraphics[scale=1]{figures/sixgon.pdf}
%\caption{A 6-gon with labeled vertices.}
%\end{figure}
%

%\newpage
%
%
%\subsection{Case study V: The 7-gon, including tree orders}
%
%\begin{figure}[h!]
%\centering
%\includegraphics[scale=1]{figures/sevengon.pdf}
%\caption{A 7-gon with labeled vertices.}
%\end{figure}
%
%
%\newpage
%
%\subsection{Case study VI: The 8-gon, including tree orders}
%
%\begin{figure}[h!]
%\centering
%\includegraphics[scale=1]{figures/eightgon.pdf}
%\caption{A 8-gon with labeled vertices.}
%\end{figure}
%
%
%\newpage
%
%
%\subsection{Case study VII: The 9-gon, including tree orders}
%
%\begin{figure}[h!]
%\centering
%\includegraphics[scale=1]{figures/ninegon.pdf}
%\caption{A 9-gon with labeled vertices.}
%\end{figure}
%
%\newpage
%
%\subsection{Case study VIII: The 10-gon, including tree orders}
%
%\begin{figure}[h!]
%\centering
%\includegraphics[scale=1]{figures/tengon.pdf}
%\caption{A 10-gon with labeled vertices.}
%\end{figure}
%
%
%
%
%\subsection{Exploiting the symmetry}

%\section{Open questions}
%\begin{enumerate}
%\item[1)] Do that single Kruskal fans $\mathcal{K}(\subdiv)$  patch together to form a complete fan in $\R^{\#\A}$?
%\end{enumerate}

\appendix

\end{document}